\date{}
\newtheorem{statement}{}[section]
\newtheorem{theorem}[statement]{Theorem}
\newtheorem{lemma}[statement]{Lemma}
\newtheorem{proposition}[statement]{Proposition}
\newtheorem{corollary}[statement]{Corollary}
\newcommand\C{\mathbb C}
\newcommand\N{\mathbb N}
\newcommand\T{\mathbb T}
\newcommand\D{\mathbb D}
\newcommand\e{{\rm e}}
\renewcommand \Re{{\mathfrak R}{\rm e}\,}
\renewcommand \Im{{\mathfrak I}{\rm m}\,}
\let\phi=\varphi
\newcommand\HH{\mathbb H}
\title{\bf An extremal composition operator on the Hardy space of the bidisk with small approximation numbers}
\author{\it Daniel Li,  Herv\'e Queff\'elec, Luis Rodr{\'\i}guez-Piazza }
\date{\footnotesize \today}
\begin{document}

\maketitle

\noindent {\bf Abstract.} \emph{We construct an analytic self-map $\Phi$ of the bidisk $\D^2$ whose image touches the distinguished boundary, but whose 
approximation numbers of the associated composition operator on $H^2 (\D^2)$ are small in the sense that 
$\limsup_{n \to \infty} [a_{n^2} (C_\Phi)]^{1 / n} < 1$.}
\medskip

\noindent {\bf MSC 2010} primary: 47B33 ; secondary: 32A35 ; 46B28
\medskip

\noindent {\bf Key-words} approximation numbers ; bidisk ; composition operator ; cusp map ; distinguished boundary ; Hardy space

\section{Introduction}

For composition operators $C_\Phi \colon H^2 (\D) \to H^2 (\D)$ on the Hardy space of the unit disk, the decay of their approximation numbers 
$a_n (C_\Phi)$ cannot be arbitrarily fast, and actually cannot supersede a geometric speed (\cite{Parfenov}; see also \cite[Theorem~3.1]{LIQUEROD}): there 
exists a positive constant $c$ such that:
\begin{displaymath}
\qquad\qquad\quad a_n (C_\Phi) \gtrsim \e^{ - c n} \, , \qquad n = 1, 2, \ldots 
\end{displaymath}
It is easy to see that this speed occurs when $\| \Phi \|_\infty < 1$, and we proved in \cite[Theorem~3.4]{LIQUEROD} that a geometrical speed only takes 
place in this case; in other words:
\begin{equation} \label{hada} 
\qquad \Vert \Phi \Vert_\infty = 1 \quad \Longleftrightarrow \quad \lim_{n \to \infty} [a_n (C_\Phi)]^{1 / n} = 1 \, .
\end{equation}

This leads to the introduction, for an operator $T$ between Banach spaces, of the parameters:
\begin{equation}
\beta^{-} (T) = \liminf_{n \to \infty}\, [a_n (T)]^{1 / n} \quad \text{and} \quad \beta^{+} (T) = \limsup_{n \to \infty}\, [a_n (T)]^{1 / n} \, ,
\end{equation}
where $a_n (T)$ is the $n$-th approximation number of $T$. When $[a_n (T)]^{1 / n}$ actually has a limit, i.e. when $\beta^{-} (T) = \beta^{+} (T)$, we write 
it $\beta (T)$. 

What is proved in \cite[Theorem~3.4]{LIQUEROD} is that $\beta (C_\Phi) = 1$ if and only if $\| \Phi \|_\infty = 1$. Later, in \cite{SRF}, we gave, when 
$\| \Phi \|_\infty < 1$,  a formula for this parameter in terms of the Green capacity of $\Phi (\D)$, which allowed us to recover \eqref{hada}.
\smallskip

More generally, for $N \geq 1$, we introduce:
\begin{equation} \label{coin-coin} 
\beta_{N}^{-} (T) = \liminf_{n \to \infty} \, [a_{n^N}(T)]^{1/n} \quad \text{and} \quad 
\beta_N^{+} (T) = \limsup_{n \to \infty}\, [a_{n^N} (T)]^{1 / n} \, ,
\end{equation}
and:
\begin{equation} \label{coin} 
\beta_{N}(T) = \lim_{n\to \infty} [a_{n^N}(T)]^{1/n} 
\end{equation} 
when the limit exists. It is clear that $0\leq \beta_{N}^{\pm}(T)\leq 1$, and it is interesting to know when the extreme cases $\beta_{N}^{\pm}(T) = 0$ or 
$\beta_{N}^{\pm} (T) = 1$  occur. For example:
\begin{align*}
\beta_{N}^{-} (T) > 0 \quad & \Longleftrightarrow \quad a_{n^N} (T) \gtrsim \e^{- \tau n} \, , \quad \text{ with } \tau > 0 \smallskip \\
\beta_{N}^{-} (T) = 1 \quad & \Longleftrightarrow \quad a_{n^N} (T) \gtrsim \e^{- n \varepsilon_n} \, , \quad \text{with }  \varepsilon_n \to 0 \, .
\end{align*}

It is coined in \cite{BLQR} (see also \cite{DHL} and \cite{DHL-pluricap}) that $\beta_N^{\pm} (C_\Phi)$ are the suitable parameters for the composition 
operators on $H^2 (\D^N)$, and it is proved, for any $N \geq 1$, that $\beta_{N}^{-} (C_\Phi) > 0$, as soon as $\Phi$ is non degenerate (i.e. the Jacobian 
$J_\Phi$ is not identically $0$) and the operator $C_\Phi$ is bounded on $H^{2} (\D^N)$. As for an expression of $\beta_{N}^{\pm} (C_\Phi)$ in terms of 
``capacity'', only partial results are known so far (\cite{DHL} and \cite{DHL-pluricap}) and the application to a result like \eqref{hada} fails in general. We gave 
an example of such a phenomenon in \cite[Theorem~5.12]{DHL}. In the present paper we give a shaper result. 

\section{Background and notation} 

Let $\D$ be the open unit disk, $H^{2} (\D^N)$ the Hardy space of the polydisk $\D^N$, and $\Phi \colon \D^N \to \D^N$ an analytic map. When $N = 1$, 
it is well-known (see \cite{COMA} or \cite{SHA}) that $\Phi$ induces a composition operator $C_\Phi \colon H^{2} (\D) \to H^{2} (\D)$ by the formula:
\begin{displaymath}
C_{\Phi} (f) = f \circ\Phi \, ,
\end{displaymath}
and the connection between the ``symbol'' $\Phi$ and the properties of the operator $C_{\Phi}$, in particular its compactness, can be further studied 
(see \cite{COMA} or \cite{SHA}). When $N > 1$, $C_\Phi$ is not bounded in general (see \cite{COMA}).

Let $\T$ be the unit circle, and $m$ the normalized Haar measure on $\T^N$. A positive Borel measure $\mu$ on $\D^N$ is called a Carleson measure (for the 
space $H^{2} (\D^N)$) if the canonical injection $J \colon H^{2} (\D^N) \to L^{2} (\mu)$ is bounded. When $\Phi \colon \D^N \to \D^N$ is analytic and 
induces a bounded composition operator on $H^{2}(\D^N)$, the pullback measure $m_\Phi = \Phi^{\ast} (m)$, defined, for any test function $u$, by:
\begin{displaymath}
\int_{\D^N} u (w) \, dm_{\Phi} (w) = \int_{\T^N} u [\Phi^{\ast} (\xi)] \, dm (\xi) \, ,
\end{displaymath}
is a Carleson measure. Here $\Phi^\ast$ is the radial limit function, defined for $m$-almost every $\xi \in \T^N$, by 
$\Phi^{\ast} (\xi) = \lim_{r \to 1^{-}} \Phi (r \xi)$. 
\smallskip

For $\xi \in \T = \partial{\D}$ and $h > 0$, the Carleson window $S (\xi, h)$ is defined as:
\begin{equation}
S (\xi, h) = \{z \in \D \, ; \ |z - \xi| \leq h \} \, . 
\end{equation}

If $f \in \text{Hol}\, (\mathbb{D}^{2})$, $D_{\! j}^{\, k} f$ denotes the $k$-th derivative of $f$ with respect to the $j$-th variable ($j =1, 2$).
\smallskip

We denote by $A (\D)$ the disk algebra, i.e. the space of functions holomorphic in $\D$ and continuous on $\overline{\D}$. We similarly define the bidisk 
algebra  $A (\D^2)$.
\medskip

Let $H_1$ and $H_2$ be Hilbert spaces, and $T \colon H_1 \to H_2$ an operator. The $n$-th approximation number $a_{n}(T)$ of  $T$, $n = 1,2, \ldots$, is 
defined (see \cite{CAST}) as the distance (for the operator-norm) of $T$ to operators of rank $ < n$:
\begin{equation}
a_n (T) = \inf_{\text{rank}\, R < n} \| T - R \| \, .
\end{equation}

The approximation numbers have the ideal property:
\begin{displaymath}
a_{n} (A T B) \leq \Vert A \Vert \, a_{n} (T) \, \Vert B \Vert \, . 
\end{displaymath}

The $n$-th Gelfand number $c_{n} (T)$ of $T$ is defined by:
\begin{equation}
c_{n} (T) = \inf_{\text{codim}\, E < n} \Vert T_{\mid E} \Vert \, .
\end{equation}
As an easy consequence of the Schmidt decomposition, we have for any compact operator between Hilbert spaces:
\begin{equation}
c_{n} (T) = a_{n}(T) \, .
\end{equation}

If $T, T_1, T_2 \colon H \to H'$ are operators between Hilbert spaces $H$ and $H'$, we write $T = T_1 \oplus T_2$ if $T = T_1 + T_2$ and:
\begin{displaymath}
\qquad \quad \Vert T x \Vert^2 = \Vert T_{1} x \Vert^2 + \Vert T_{2} x \Vert^2 \, , \quad \text{for all } x \in H\, .
\end{displaymath}

The subaddivity of approximation numbers is then expressed by:
\begin{equation} \label{sa} 
a_{j + k} (T_1\oplus T_2) \leq a_{j} (T_1) + a_{k} (T_2) \, .
\end{equation}

We denote by $\N = \{0, 1, 2, \ldots\}$ the set of non-negative integers, and by $[x]$ the integral part of the real number $x$.
\smallskip

We write $X \lesssim Y$ to indicate that $X \leq c\, Y$ for some constant $c > 0$, and $X \approx Y$ to indicate that $X \lesssim Y$ and $Y \lesssim X$.

\section{Purpose of the paper}

Let us recall that the Hardy space of the polydisk is the space:
\begin{displaymath}
H^{2} (\D^N) = \Big\{f \colon \D^N \to \C \, ; \  f (z) = \sum_{\alpha\in \N^{N}} a_\alpha z^\alpha \text{ and } 
\Vert f \Vert_{2}^{2}:=\sum |a_\alpha|^2 < \infty \Big\} \, .
\end{displaymath}

If $\Phi \colon \D^N \to \D^N$ is an analytic map, the associated composition operator $C_\Phi$ (which is not always bounded on $H^{2}(\D^N)$) is 
defined by:
\begin{displaymath}
C_{\Phi} (f) = f \circ \Phi \, .
\end{displaymath}

We will mainly here be interested in the case $N = 2$. 
\smallskip

The reproducing kernel $K_a$ of $H^{2} (\D^2)$ is, with $a = (a_1, a_2)$ and $z = (z_1, z_2)$:
\begin{equation}
K_{a} (z) = \frac{1}{(1 - \overline{a_1} z_1) (1 - \overline{a_2} z_2)} \, \cdot  
\end{equation}
As a consequence:
\begin{equation} \label{asa} 
| f (a) | = | \langle f, K_a \rangle | \leq \frac{\Vert f \Vert_2}{\sqrt{(1 - |a_1|^2) (1 - |a_2|^2)}} \, \cdot
\end{equation}
In particular, the functions in the unit ball of $H^2 (\D^2)$ are uniformly bounded on compact subsets of $\D^2$. 
\smallskip

In \cite[Theorem~5.12]{DHL}, we gave an example of a holomorphic self-map $\Phi \colon \D^2 \to \D^2$, continuous on the closure $\overline{\D^2}$, 
such that $\Vert \Phi \Vert_\infty = 1$, that is:
\begin{equation} \label{one} 
\Phi (\T^2) \cap \partial{\D^2} \neq \emptyset \, ,
\end{equation}
and yet:
\begin{equation}
\beta_{2}^{+} (C_\Phi) < 1 \, ,
\end{equation}
in contrast with the one-dimensional case (\cite[Theorem~3.4]{LIQUEROD}). \goodbreak

Understanding where the difference really lies when we pass to the multidimensional case is a big challenge: it does not seem to be a matter of regularity of the 
boundary, and a similar example probably holds for the Hardy space of the ball. It might be a matter of  boundary: the Shilov boundary of the ball is its usual 
boundary, but that of the polydisk is its distinguished boundary:
\begin{displaymath}
\partial_{e} {\D^N} = \{z = (z_j) \, ; \ |z_j| = 1 \text{ for all }  j = 1, \ldots, N \} = \T^N 
\end{displaymath}
(indeed, the distinguished maximum principle tells that, for $f$ analytic in $\D^N$ and continuous on $\overline{\D^N}$, it holds 
$\max_{z \in \D^N} | f (z) | = \max_{z \in \partial_{e} {\D^N}} | f (z) |$). The aim of this paper is to show that this is not the case and, improving on 
(\cite[Theorem~5.12]{DHL}) and \eqref{one}, to build an analytic self-map $\Phi \colon \D^2 \to \D^2$, continuous on $\overline{\D^2}$, non-degenerate 
and such that:
\begin{equation} \label{two} 
\Phi (\T^2) \cap \partial_{e}{\D^2} \neq \emptyset \, , \quad \text{but} \quad  \beta_{2}^{+} (C_\Phi) < 1 \, .
\end{equation}

The paper is organized as follows. In Section~\ref{section cusp}, we recall with some detail the definition and main properties of a so-called \emph{cusp map} 
$\chi \in A (\D)$, to be of essential use in our counterexample. In Section~\ref{lemmas}, we prove several lemmas which constitute the core or the proof. In 
Section~\ref{section main}, we state and prove our main theorem.

\section{The cusp map} \label{section cusp}

The \emph{cusp map} $\chi \colon \D \to \D$ is analytic in $\D$ and extends continuously on $\overline{\D}$. The boundary of its image is formed by three 
circular arcs of respective centers $\frac{1}{2}$, $1 + \frac{i}{2}$, $1 - \frac{i}{2}$, and of radius $\frac{1}{2}$ (see Figure~\ref{cusp map}). However, 
the parametrization $t \mapsto  \chi (\e^{i t})$ involves logarithms. 

\begin{figure}[ht]
\centering
\includegraphics[width=6cm]{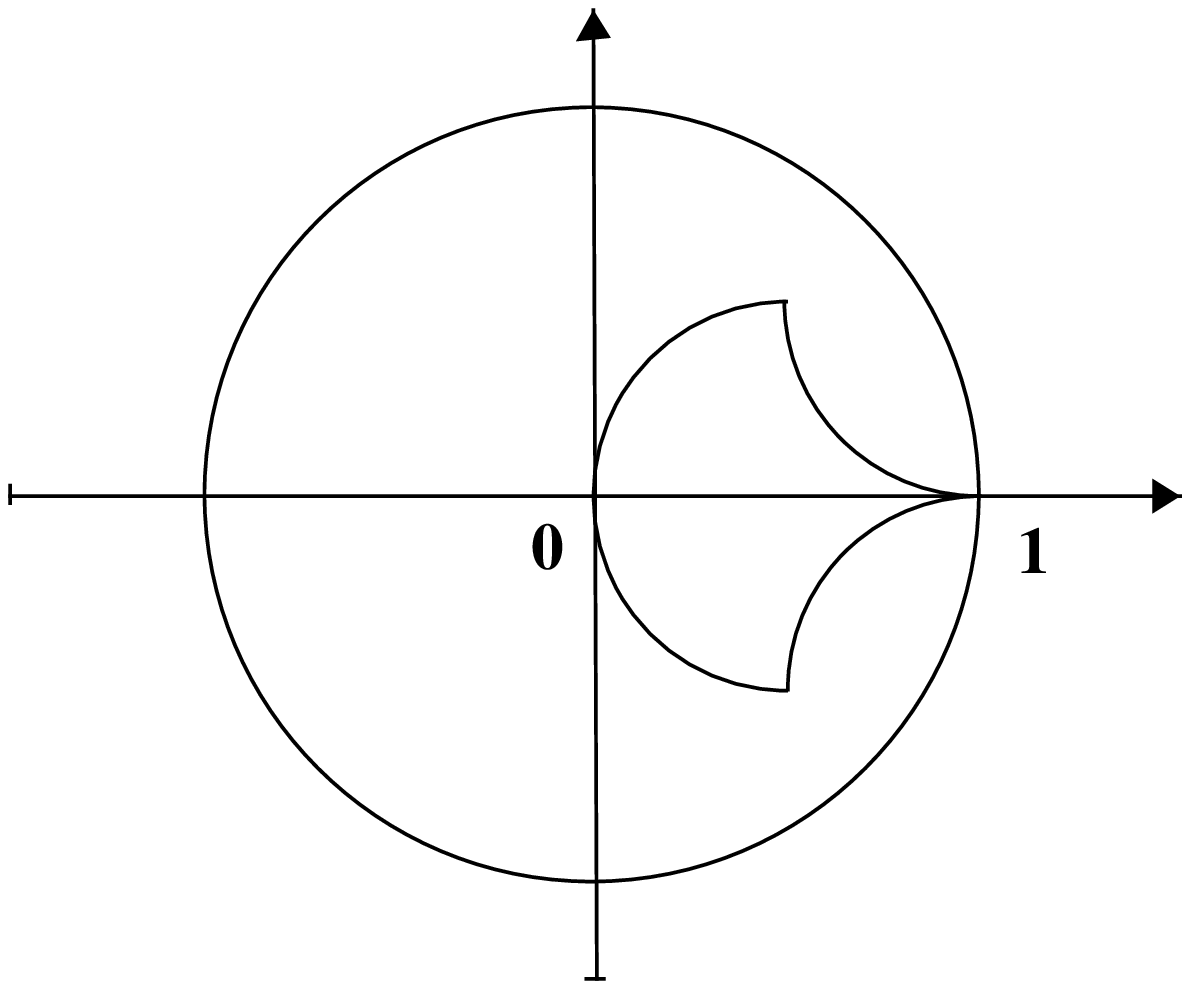}
\caption{\it Cusp map domain} \label{cusp map}
\end{figure} 

It was often used by the authors (\cite{LIQURO}, \cite{PDHL}) as an extremal example. 

We first recall the definition of $\chi$. 

Let $\D^+ = \{z \in \D \, ; \ \Re z > 0 \}$ be the right half-disk. Let now $\HH$ be the upper half-plane, and $T \colon \D \to \HH$ defined by:
\begin{displaymath}
T (u) = i \, \frac{1 + u}{1 - u} \, , \quad \text{with} \quad T^{- 1} (s) = \frac{s - i}{s + i} \, \cdot
\end{displaymath}
Taking the square root of $T$, we map $\D$ onto the first quadrant defined by $Q_1 = \{z \in \C \, ; \ \Re z > 0\}$; we go back to the half-disk 
$\{z \in \D \, ; \ \Im z < 0\}$ by $T^{- 1}$. Finally, make a rotation by $i$ to go onto $\D^+$. We get:
\begin{displaymath}
\chi_{0} (z) = \frac{\displaystyle \phantom{-} \Big(\frac{z - i}{i z - 1} \Big)^{1/2} - i}{\displaystyle -i \Big( \frac{z - i}{i z - 1} \Big)^{1/2} + 1} \, \cdot 
\end{displaymath}
One has $\chi_{0} (1) = 0$, $\chi_{0} (- 1) = 1$, $\chi_{0} (i) = - i$, and $\chi_{0} (- i) = i$. The half-circle $\{z \in \T \, ; \ \Re z \geq 0\}$ is mapped by 
$\chi_0$ onto the segment $[-i, i]$ and the segment $[- 1, 1]$ onto the segment $[0, 1]$. 

Set now, successively:
\begin{equation} \label{suc} 
\chi_{1} (z) = \log \chi_{0}(z) \, , \quad  \chi_{2} (z) = - \frac{2}{\pi} \, \chi_{1} (z) + 1 \, , \quad \chi_{3} (z) = \frac{1}{\chi_{2} (z)} 
\end{equation}
and finally:
\begin{equation} \label{fin} 
\chi (z) = 1 - \chi_{3} (z) \, .
\end{equation}

We now summarize the properties of the cusp map $\chi$ in the following proposition.

\begin{proposition} \label{cusp} 
The cusp map satisfies:
\begin{enumerate}
\setlength\itemsep{2 pt}

\item [$1)$] $\displaystyle 1  - |\chi(z)| \lesssim \frac{1}{\log (2/|1 - z|)}$; 

\item [$2)$] $|1 - \chi (z)| \leq K ( 1 - |\chi (z) |)$ for all $z \in \D$, where $K$ is a positive constant;

\item [$3)$] $\chi(\D)$ is the intersection of the open disk $D \big( \frac{1}{2} \raise 1,5 pt \hbox{,} \, \frac{1}{2} \big)$ with the exterior of the two open 
disks $D \big(1 + \frac{i}{2} \raise 1,5 pt \hbox{,} \, \frac{1}{2} \big)$ and 
$D \big( 1 - \frac{i}{2} \raise 1,5 pt \hbox{,} \, \frac{1}{2} \big)$;

\item [$4)$] $\chi (1) = 1$, $\chi (\overline{z}) = \overline{\chi (z)}$ and $| \chi (z) - 1| \leq 1$ for all $z \in \D$;

\item [$5)$] for $0 < |t| \leq \pi / 4$, we have $1 -  \Re \chi (\e^{i t}) \approx 1 / (\log 1/|t|)$;

\item [$6)$] $\chi (\overline{\D}) \subseteq \{z = x + i y \, ;\ 0 \leq x \leq 1 \text{ and } |y| \leq 2 (1 - x)^2 \}$. 
\end{enumerate}
\end{proposition}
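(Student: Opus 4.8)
The backbone of the proof is the conformal nature of the construction: each of the maps $\chi_0,\dots,\chi$ is a composition of a Möbius transformation, a square root, a logarithm and an affine map, so it suffices to follow the successive images of $\D$ and of its boundary. I would first record that $\chi_0$ maps $\D$ conformally onto the right half-disk $\D^+=\{w\in\D:\Re w>0\}$ (this is exactly the content of the construction via $T$, the square root, $T^{-1}$ and the rotation by $i$, together with the boundary values $\chi_0(1)=0$, $\chi_0(i)=-i$, $\chi_0(-i)=i$, $\chi_0(-1)=1$ already listed). Applying $\log$ sends $\D^+$ onto the half-strip $\{u+iv:u<0,\ |v|<\pi/2\}$, and the affine map $w\mapsto-\tfrac2\pi w+1$ turns this into $\chi_2(\D)=\{\Re w>1,\ |\Im w|<1\}$. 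Then I would compute the image of this half-strip under the inversion $w\mapsto 1/w$: using $1/(1+it)=\tfrac{1-it}{1+t^2}$ and $1/(s\pm i)=\tfrac{s\mp i}{s^2+1}$, the three boundary pieces $\Re w=1$ and $\Im w=\pm1$ map respectively to arcs of the circles $|\zeta-\tfrac12|=\tfrac12$ and $|\zeta\mp\tfrac{i}2|=\tfrac12$. Finally $\chi=1-\chi_3$ reflects these through $\tfrac12$, sending $|\zeta-\tfrac12|=\tfrac12$ to itself and $|\zeta\mp\tfrac{i}2|=\tfrac12$ to $|\chi-(1\pm\tfrac{i}2)|=\tfrac12$; since a conformal map carries $\D$ to one definite side of each boundary arc, testing one interior point identifies $\chi(\D)$ with the lens described in $3)$.

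Items $4)$, $6)$ and $2)$ then follow from $3)$ together with an elementary symmetry. The identity $\chi(\overline z)=\overline{\chi(z)}$ I would get by propagating the relation through the chain: with $g(z):=\tfrac{z-i}{iz-1}=-i\tfrac{z-i}{z+i}$ one checks $g(\overline z)=1/\overline{g(z)}$, and a direct computation shows that the square root and the Möbius map defining $\chi_0$ convert this into honest conjugation $\chi_0(\overline z)=\overline{\chi_0(z)}$, after which $\log$, the affine map and the inversion each commute with conjugation. The value $\chi(1)=1$ comes from $\chi_0(1)=0$, hence $\chi_3\to0$, while $|1-\chi|=|\chi_3|\le1$ is immediate from $\chi_3(\D)\subseteq D(\tfrac12,\tfrac12)$. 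For $6)$, writing $\chi=x+iy$, the defining inequalities of the region in $3)$ read $x^2+y^2\le x$ and $(1-x)^2+y^2\ge\pm y$; the first gives $0\le x\le1$ and $|y|\le\tfrac12$, and the second pair combine into $|y|(1-|y|)\le(1-x)^2$, whence $|y|\le2(1-x)^2$ since $1-|y|\ge\tfrac12$. Item $2)$ is then purely algebraic: from $6)$, $|1-\chi|^2=(1-x)^2+y^2\le5(1-x)^2$, while $x^2+y^2\le x$ gives $1-|\chi|\ge\tfrac12(1-|\chi|^2)\ge\tfrac12(1-x)$, so $|1-\chi|\le2\sqrt5\,(1-|\chi|)$.

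The remaining items $1)$ and $5)$ are the quantitative boundary estimates, and this is where the real work lies. The key local fact is that $\chi_0$ has a simple zero at $z=1$ with $\chi_0(z)\sim\tfrac14(1-z)$ as $z\to1$ (I would get this from $g'(1)=-i$, $w'(1)=-\tfrac12$ and the derivative of the final Möbius map), so that $|\chi_0(z)|\approx|1-z|$ near $1$ and hence $\Re\chi_1(z)=\log|\chi_0(z)|\approx-\log(1/|1-z|)$. Propagating, $\Re\chi_2\approx\tfrac2\pi\log(1/|1-z|)$ with $\Im\chi_2$ bounded; since $\Re\chi_2>1$ on $\D$ and $\Re\chi_3=\Re\chi_2/|\chi_2|^2\le1/\Re\chi_2$, I obtain $1-|\chi|\le 1-|\chi|^2=2\Re\chi_3-|\chi_3|^2\le 2/\Re\chi_2\lesssim 1/\log(2/|1-z|)$, which is $1)$ near $1$; away from $1$ the right-hand side stays bounded below while $1-|\chi|\le1$, so the estimate is trivial there. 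For $5)$ I would specialise to $z=\e^{it}$: on the arc $|t|\le\pi/2$ the point $\chi_0(\e^{it})$ lies on the segment $[-i,i]$, so $\Im\chi_1(\e^{it})=\pm\pi/2$ and $\Re\chi_1=\log|\chi_0(\e^{it})|\approx\log|t|$; hence $\Re\chi_2\approx\tfrac2\pi\log(1/|t|)$ with $|\Im\chi_2|=1$, so $1-\Re\chi(\e^{it})=\Re\chi_3(\e^{it})=\Re\chi_2/|\chi_2|^2\approx1/\log(1/|t|)$, now with matching upper and lower bounds. The main obstacle is precisely the control of $\chi_0$ near $1$ and the propagation of the two-sided estimate $|\chi_0(\e^{it})|\approx|t|$ through the logarithm, keeping the error terms uniform for $0<|t|\le\pi/4$; everything else is bookkeeping.
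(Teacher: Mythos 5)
Your proposal is correct, but it reconstructs much more than the paper itself proves: the paper's proof of this proposition consists of citing \cite[Lemma~4.2]{LIQURO} for items $1)$ to $5)$ and proving only item $6)$, by exactly the computation you give (the exterior-disk condition forces $|y| \le y^2 + h^2$ with $h = 1 - x$, and $|y| \le \tfrac12$ then yields $|y| \le 2 h^2$). Where you genuinely differ is in the logical organization of the rest. You re-derive $3)$ by pushing $\D$ through the chain $\chi_0$, $\log$, the affine map, the inversion $w \mapsto 1/w$ and $w \mapsto 1 - w$ (your computation of the image of the half-strip $\{\Re w > 1, \ |\Im w| < 1\}$ under inversion is right); you get the symmetry in $4)$ from $g(\overline{z}) = 1/\overline{g(z)}$; and you then obtain $2)$ as a purely algebraic consequence of $3)$ and $6)$, with the explicit constant $K = 2\sqrt{5}$. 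In the paper, by contrast, $2)$ is an input quoted from the reference and $6)$ is the new output, so your order $3) \Rightarrow 6) \Rightarrow 2)$ is genuinely different and makes the whole proposition self-contained. Your treatment of the quantitative items $1)$ and $5)$ also checks out: $g'(1) = -i$, $w'(1) = -\tfrac12$, $\chi_0'(1) = -\tfrac14$, so $|\chi_0(z)| \approx |1 - z|$ near $1$, whence $\Re \chi_2 \approx \log (2/|1 - z|)$ and $1 - |\chi| \le 1 - |\chi|^2 = 2 \Re \chi_3 - |\chi_3|^2 \le 2/\Re \chi_2$, while on the boundary arc $|\Im \chi_2| = 1$ gives the matching lower bound in $5)$. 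What the paper's route buys is brevity; what yours buys is independence from \cite{LIQURO}. The one point you should make fully explicit is that $\chi_0$ extends analytically across $z = 1$ (the square-root branch is analytic near $g(1) = -1$ because $g(\D) = \HH$ stays off the cut $[0, +\infty)$), since your Taylor expansion at the boundary point $1$, and hence the uniform two-sided estimate $|\chi_0(\e^{it})| \approx |t|$ for $0 < |t| \le \pi/4$, rests on it.
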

\begin{proof}
Items $1)$ to $5)$ are proved in \cite[Lemma~4.2]{LIQURO}. To prove $6)$, write $\chi (z) = (1- h) + i y$. Since $\chi (\overline{z}) = \overline{\chi (z)}$, 
we can assume $y \geq 0$. Since $\chi (\D) \cap D \big( 1 + \frac{i}{2} \raise 1,5 pt \hbox{,} \, \frac{1}{2} \big) = \emptyset$, we have  
$\big| \chi (z) - \big( 1 + \frac{i}{2} \big) \big| \geq \frac{1}{2}$; hence:
\begin{displaymath}
h^2 + \Big(y - \frac{1}{2} \Big)^2 = \bigg| \chi (z) - \Big( 1 + \frac{i}{2} \Big) \bigg|^2 \geq \frac{1}{4} \, ,
\end{displaymath}
so that $y \leq y^2 + h^2$. But $y \leq 1/2$, since $\chi (z) \in D \big( \frac{1}{2} \raise 1,5 pt \hbox{,} \, \frac{1}{2} \big)$; therefore $y^2 \leq y / 2$, 
so we get $y \leq 2 h^2$.
\end{proof}
%

\section{Preliminary lemmas} \label{lemmas} 

In this section, we collect some lemmas, which will reveal essential  in the proof of our counterexample. 
\smallskip

We consider the map $\varphi = \varphi_\theta$, $0 < \theta < 1$, defined, for $z \in \overline \D \setminus \{1 \}$, by:
\begin{equation} \label{def} 
\varphi \, (z) = \exp \big( - (1 - z)^{- \theta} \big) \, . 
\end{equation} 

We observe, since $\Re (1 - z) \geq 0$ for $z \in \overline \D$, that:
\begin{equation} \label{ob} 
|\varphi \, (z)| \leq \exp \big( - \delta \, |1 - z|^{- \theta} \big) \, ,
\end{equation}
where $\delta = \cos \pi \theta / 2 > 0$. Moreover, \eqref{ob} shows that $\varphi \in A (\D)$, since:
\begin{displaymath} 
\lim_{z \to 1, z \in \overline{\D}} \varphi \, (z) = 0 =: \varphi \, (1) \, .
\end{displaymath} 

Our first lemma will allow us to define our symbol $\Phi$.

\begin{lemma} \label{eins} 
One can adjust $0 < c < 1$ so as to get:
\begin{equation} \label{un} 
\qquad \qquad \quad |\chi (z)|+ 2 \, c \, | \varphi \circ \chi (z) | < 1 \qquad \text{for all } z \in \D \, . 
\end{equation} 
Hence, if we set, for any $g \in A (\D)$ with $\| g \|_\infty \leq 1$:
\begin{equation} \label{def-Phi}
\Phi (z_1, z_2) = \big( \chi (z_1), \chi (z_1) + c \, (\phi \circ \chi) (z_1) \, g (z_2) \big) \, , 
\end{equation} 
we have $\Phi (\D^2) \subseteq \D^2$.
\end{lemma}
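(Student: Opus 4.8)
The plan is to reduce the uniform inequality \eqref{un} to a single scalar bound. Writing $w = \chi(z)$, which ranges over $\chi(\D) \subset \D$ as $z$ ranges over $\D$, inequality \eqref{un} reads $|w| + 2c\,|\varphi(w)| < 1$, i.e. $2c\,|\varphi(w)| < 1 - |w|$. It therefore suffices to prove that the quantity
\begin{displaymath}
M := \sup_{w \in \chi(\D)} \frac{|\varphi(w)|}{1 - |w|}
\end{displaymath}
is finite; for then any $c$ with $0 < c < \min\bigl(1, 1/(2M)\bigr)$ works, since $2c\,|\varphi(w)| \le 2cM\,(1-|w|)$ yields $|w| + 2c\,|\varphi(w)| \le 1 - (1-2cM)(1-|w|) < 1$, the last inequality being strict because $1 - 2cM > 0$ and $|w| < 1$ for $w \in \D$. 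This choice also forces $c < 1$, as required.

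To bound $M$, I first note that $\chi(z) \ne 1$ for $z \in \D$ (since $1 \notin \D$ while $\chi(z)\in\D$), so $t := |1 - w| \in (0,1]$ by item $4)$ of Proposition~\ref{cusp}. Item $2)$ of the same proposition gives $1 - |w| \ge |1-w|/K = t/K$, and the basic estimate \eqref{ob} gives $|\varphi(w)| \le \exp(-\delta\, t^{-\theta})$ with $\delta = \cos(\pi\theta/2) > 0$. Hence
\begin{displaymath}
\frac{|\varphi(w)|}{1-|w|} \le K\,\frac{\exp(-\delta\, t^{-\theta})}{t}\,\cdot
\end{displaymath}
The function $t \mapsto t^{-1}\exp(-\delta\, t^{-\theta})$ is continuous on $(0,1]$ and tends to $0$ as $t \to 0^+$ (the super-exponential decay beating the factor $1/t$), hence is bounded on $(0,1]$; this gives $M < \infty$.

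Finally, with $c$ chosen as above and any $g \in A(\D)$ with $\|g\|_\infty \le 1$, the map $\Phi$ of \eqref{def-Phi} has first coordinate $\chi(z_1) \in \D$, and for $(z_1,z_2) \in \D^2$ its second coordinate is controlled, using $|g(z_2)| \le 1$ and \eqref{un}, by
\begin{displaymath}
\bigl|\chi(z_1) + c\,(\varphi\circ\chi)(z_1)\,g(z_2)\bigr| \le |\chi(z_1)| + c\,|\varphi(\chi(z_1))| \le |\chi(z_1)| + 2c\,|\varphi(\chi(z_1))| < 1 \, .
\end{displaymath}
Thus both coordinates lie in $\D$ and $\Phi(\D^2) \subseteq \D^2$. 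Note that the factor $2$ in \eqref{un} is stronger than what is strictly needed here, leaving slack for later use.

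The main obstacle is the finiteness of $M$, and the crux is item $2)$ of Proposition~\ref{cusp}: it forces $w = \chi(z)$ to approach the unit circle only in the immediate vicinity of the cusp tip $w = 1$, where $1 - |w|$ and $|1 - w|$ are comparable, which is precisely the region in which $\varphi$ decays super-exponentially. Without this tangency control, $w$ could in principle approach $\partial\D$ while $|1-w|$ stayed bounded below, and then $\varphi(w)$ would fail to be small enough to dominate $1 - |w|$.
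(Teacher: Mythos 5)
Your proposal is correct and rests on exactly the same two ingredients as the paper's own proof, namely Proposition~\ref{cusp},\,$2)$ (which gives $1-|\chi(z)| \geq |1-\chi(z)|/K$) and the decay estimate \eqref{ob}; your packaging of the comparison as the finiteness of the supremum $M$ of $|\varphi(w)|/(1-|w|)$ over $\chi(\D)$ is just a streamlined version of the paper's two-case split ($|1-\chi(z)| < \eta$ versus $|1-\chi(z)| \geq \eta$), since the boundedness of $t \mapsto t^{-1}\exp(-\delta t^{-\theta})$ on $(0,1]$ encodes precisely that dichotomy. No gaps: the choice $0 < c < \min\bigl(1, 1/(2M)\bigr)$ and the final verification that both coordinates of $\Phi$ lie in $\D$ are sound.
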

\noindent {\bf Remark.} The factor $2$ in \eqref{un} is needed in order to get the following inequalities, to be used later, for $z \in \D$ and 
$w = \chi (z) + c \, (\varphi \circ\chi) (z) \,u$, with $|u| \leq 1$:
\begin{equation} \label{deux} 
|w|\leq \frac{1 + |\chi (z)|}{2} \, \raise 1,5 pt \hbox{,}
\end{equation}
or, equivalently:
\begin{equation} \label{deux-et-demi}
1 - |w| \geq \frac{1 - |\chi (z)|}{2} \, \cdot
\end{equation} 
Indeed:
\begin{displaymath} 
|w| \leq  |\chi (z)| + c \, |\varphi \circ \chi (z)|\leq  |\chi (z)| + \frac{1 - |\chi (z)|}{2} = \frac{1 + |\chi (z)|}{2} \, \cdot
\end{displaymath} 
\begin{proof} [Proof of Lemma~\ref{eins}] 
Set $X = |1 - \chi (z)|$, so that, with $K$ the constant of Proposition~\ref{cusp},\,$2)$:
\begin{equation} \label{mar} 
| \chi (z) | \leq 1 - \frac{|1 - \chi (z)|}{K} = 1 - \frac{X}{K} \, \cdot
\end{equation}
For $z \in \D$ and $X$ close enough to zero, say $X < \eta$,  we have $2 \exp (- \delta X^{-\theta}) < \frac{X}{K}\,$. If we adjust $0 < c < 1$ so as to 
have $c < \frac{\eta}{2K}$, it follows from \eqref{ob} and \eqref{mar} that, for $X< \eta$:
\begin{displaymath} 
|\chi (z)|+ 2 \, c \, |\varphi \circ \chi (z)|\leq 1 - \frac{X}{K} + 2 \exp (- \delta X^{- \theta}) < 1 \, .
\end{displaymath} 
However, for $X \geq \eta$, \eqref{mar} says that $|\chi (z) | \leq 1 - \frac{\eta}{K}$, so:
\begin{displaymath} 
|\chi (z)|+ 2 \, c\, |\varphi \circ \chi (z)|\leq 1 - \frac{\eta}{K}+ 2 \, c < 1 \, ,
\end{displaymath} 
as well and this ends the proof of  Lemma \ref{eins}. 
\end{proof}

Our second lemma estimates some integrals and ensures that $\Phi$ induces a compact composition operator on $H^2 (\D^2)$.

\begin{lemma}\label{zwei} 
For $0 < h \leq 1$, the following estimate holds:
\begin{equation} \label{I-zero}
I_{0} (h) := \int_{|\chi (\e^{i t}) - 1| \leq h} \frac{1}{(1 - |\chi (\e^{i t})|)^2} \, dt \lesssim \e^{- \tau / h} \, ,
\end{equation} 
\end{lemma}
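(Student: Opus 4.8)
The plan is to estimate $I_0(h)$ by relating it to an integral over the boundary circle parametrized by the argument $t$, using the key geometric fact from Proposition~\ref{cusp} that the cusp map $\chi$ flattens the boundary exponentially near the point $1$. First I would observe that the condition $|\chi(\e^{it}) - 1| \leq h$ constrains $t$ to a neighborhood of $0$. By property~$4)$ and property~$5)$ of Proposition~\ref{cusp}, we have $1 - \Re\chi(\e^{it}) \approx 1/\log(1/|t|)$ for small $|t|$, and since $|1 - \chi(\e^{it})| \leq K(1 - |\chi(\e^{it})|) \leq K \cdot C/\log(1/|t|)$ by property~$2)$ combined with property~$1)$, the constraint $|\chi(\e^{it}) - 1| \leq h$ forces $1/\log(1/|t|) \lesssim h$, i.e. $\log(1/|t|) \gtrsim 1/h$, which means $|t| \lesssim \e^{-c/h}$ for some $c > 0$.

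Next I would bound the integrand. Again using property~$1)$, we have $1 - |\chi(\e^{it})| \gtrsim 1/\log(1/|t|) \gtrsim h$ whenever $|\chi(\e^{it})-1| \leq h$; more to the point, on the relevant range of $t$ the quantity $1 - |\chi(\e^{it})|$ is comparable to $1/\log(1/|t|)$ from both sides (combining $1)$ and $5)$ via $2)$). So the integrand $1/(1 - |\chi(\e^{it})|)^2$ is of order $(\log(1/|t|))^2$. The integral then takes the form
\begin{displaymath}
I_0(h) \lesssim \int_{|t| \lesssim \e^{-c/h}} \big(\log(1/|t|)\big)^2 \, dt \, .
\end{displaymath}
A direct change of variables $s = \log(1/|t|)$, i.e. $|t| = \e^{-s}$ with $dt \approx \e^{-s}\,ds$ and $s$ ranging from $\gtrsim 1/h$ to $\infty$, converts this to $\int_{1/h}^\infty s^2 \e^{-s}\,ds$, which is dominated by $\e^{-\tau/h}$ for any $\tau < 1$ (the polynomial factor $s^2$ being absorbed into a slightly smaller exponential rate). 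This yields the claimed bound $I_0(h) \lesssim \e^{-\tau/h}$.

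The main obstacle I anticipate is making the two-sided comparison $1 - |\chi(\e^{it})| \approx 1/\log(1/|t|)$ fully rigorous on the precise region of integration, rather than just the one-sided bound furnished directly by property~$1)$. Property~$1)$ gives only the upper bound on $1 - |\chi(z)|$, while property~$5)$ controls $1 - \Re\chi(\e^{it})$; I would need property~$2)$ (the ``sub-Stolz'' comparison $|1-\chi(z)| \leq K(1-|\chi(z)|)$) together with the elementary inequality $1 - |\chi| \leq |1-\chi|$ to pin $1 - |\chi(\e^{it})|$ between constant multiples of $1 - \Re\chi(\e^{it})$, and hence of $1/\log(1/|t|)$. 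Once the region $\{|t| \lesssim \e^{-c/h}\}$ and the integrand order $(\log(1/|t|))^2$ are both secured, the remaining computation is the routine exponential-integral estimate above; the care lies entirely in assembling the correct chain of comparisons from Proposition~\ref{cusp} and in checking that the exponential rate $\tau$ can be taken strictly positive (any $\tau$ slightly below the natural rate $c$ works, since the polynomial prefactor is harmless).
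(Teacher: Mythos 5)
Your proof is correct and follows essentially the same route as the paper's: both reduce the domain of integration to $|t| \leq \e^{-c/h}$ using the lower bound $|1 - \chi(\e^{it})| \geq 1 - \Re \chi(\e^{it}) \gtrsim 1/\log(1/|t|)$ coming from Proposition~\ref{cusp},\,5), bound the integrand by $(\log (1/|t|))^2$ via property 2), and conclude with the change of variables giving $\int_{c/h}^{\infty} s^2 \e^{-s}\, ds \lesssim h^{-2}\, \e^{-c/h} \lesssim \e^{-\tau/h}$. One expository caveat: the ``since'' clause in your first paragraph cites the \emph{upper} bound $|1-\chi(\e^{it})| \lesssim 1/\log(1/|t|)$, which by itself cannot force $1/\log(1/|t|) \lesssim h$ on the integration region --- that deduction needs the \emph{lower} bound --- but you assemble exactly the right chain of comparisons in your final paragraph, so the argument as a whole stands.
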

\begin{proof}
By Proposition~\ref{cusp},\,$5)$, there exist two constants $c_1$, $c_2$ such that:
\begin{displaymath} 
\qquad \qquad \frac{c_1}{\log 1/|t|} \leq |\chi (\e^{i t}) - 1| \leq \frac{c_2}{\log 1/|t|} \, , \qquad |t| \leq \pi \, ;
\end{displaymath} 
hence:
\begin{displaymath} 
I_{0} (h)  \lesssim \int_{|t| \leq \e^{- c_1/h}} [\log (1/|t|)]^2 \, dt  
= 2 \int_{c_1 / h}^\infty x^2 \, \e^{- x} \, dx \lesssim h^{- 2} \e^{- c_1/h} \, . \qedhere
\end{displaymath} 
\end{proof}
\begin{corollary} \label{Phi-HS} 
For $g \in A (\D)$ with $0 < \Vert g \Vert_\infty \leq 1$, set:
\begin{displaymath} 
I (h) := \int_{| \chi ( \e^{i t_1}) - 1| \leq h} 
\frac{d t_1 \, dt_2}{(1 - |\chi (\e^{i t_1})|)(1 - |\chi (\e^{i t_1}) + c \, (\varphi \circ \chi) (\e^{i t_1}) \, g (\e^{i t_2})|)} \, \cdot
\end{displaymath} 
Then:
\begin{displaymath} 
I (h)  \lesssim \e^{- \tau/ h} \, .
\end{displaymath} 
Consequently, the composition operator $C_\Phi$ defined in \eqref{def-Phi} is bounded from $H^{2}(\D^2)$ to $H^{2}(\D^2)$ and is compact.
\end{corollary}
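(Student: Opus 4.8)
The plan is to bound $I(h)$ using the factorized structure of the second coordinate and then derive both boundedness and compactness of $C_\Phi$ from a Carleson-type criterion. The key observation is that in the denominator, $1 - |\chi(\e^{it_1}) + c\,(\varphi\circ\chi)(\e^{it_1})\,g(\e^{it_2})|$ can be controlled using inequality \eqref{deux-et-demi} from the Remark: setting $u = g(\e^{it_2})$ (which satisfies $|u| \leq 1$ since $\|g\|_\infty \leq 1$) and $w = \chi(\e^{it_1}) + c\,(\varphi\circ\chi)(\e^{it_1})\,u$, we get $1 - |w| \geq \tfrac{1}{2}(1 - |\chi(\e^{it_1})|)$. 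Since this lower bound is independent of $t_2$, the integrand is dominated by a quantity depending only on $t_1$, and the $t_2$-integral over the circle merely contributes a constant factor $\lesssim 1$. Thus

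\begin{equation*}
I(h) \lesssim \int_{|\chi(\e^{it_1}) - 1| \leq h} \frac{2\,dt_1}{(1 - |\chi(\e^{it_1})|)^2} = 2\, I_0(h) \lesssim \e^{-\tau/h},
\end{equation*}

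where the last step is exactly Lemma~\ref{zwei}. This reduces the corollary's integral estimate to the one already established.

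First I would record that $w$ in the statement is precisely the second coordinate $\Phi_2(\e^{it_1}, \e^{it_2})$ of the boundary values of $\Phi$, and that by Lemma~\ref{eins} we indeed have $\Phi(\D^2) \subseteq \D^2$ so that the pullback measure $m_\Phi$ is well-defined. The estimate on $I(h)$ should then be reinterpreted as a bound on the $m_\Phi$-measure of Carleson windows. Concretely, $I(h)$ is comparable to $\int_{\D^2} K_w(w)\,\mathbf{1}_{\{|w_1 - 1| \leq h\}}\,dm_\Phi(w)$, i.e. it weights the reproducing-kernel mass near the boundary point where $\Phi$ touches the distinguished boundary. The diagonal reproducing kernel $K_a(a) = 1/[(1-|a_1|^2)(1-|a_2|^2)]$ from \eqref{asa} is what produces the product of the two factors $(1-|\cdot|)^{-1}$ in the denominator (up to the harmless factor $1+|a_j| \approx 1$).

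To pass from the integral estimate to boundedness and compactness, I would invoke the standard Carleson-measure characterization for $H^2(\D^N)$: $C_\Phi$ is bounded iff $m_\Phi$ is a Carleson measure, and compact iff $m_\Phi(S(\xi,h)) = o(h^N)$ uniformly as $h \to 0$ (a vanishing-Carleson condition). The exponential decay $I(h) \lesssim \e^{-\tau/h}$ is vastly stronger than any polynomial $o(h^2)$ requirement, so compactness follows immediately once the window measures are tied to $I(h)$. The only regions of $\T^2$ contributing boundary mass near $\partial\D^2$ are those where $\chi(\e^{it_1})$ is close to $1$ — the single cusp point — since elsewhere $|\chi(\e^{it_1})| \leq 1 - \eta/K < 1$ by Proposition~\ref{cusp},\,$2)$ keeps both coordinates uniformly inside $\D$; there the integrand is bounded and contributes a finite constant.

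The main obstacle I anticipate is the careful handling of the two-dimensional Carleson geometry: the estimate $I(h)$ only constrains windows anchored at the distinguished-boundary point $(1, w_2^*)$ reached through the cusp, and one must verify that \emph{all} relevant Carleson windows $S(\xi_1, h) \times S(\xi_2, h)$ are adequately dominated, not merely those aligned with the cusp direction. The reduction via \eqref{deux-et-demi} is what makes this tractable, because it decouples the two variables at the level of the lower bound on $1 - |w|$, turning a genuinely two-variable measure estimate into the one-variable estimate $I_0(h)$ already in hand. Writing out the precise equivalence between window measures and the integral $I(h)$, including the uniformity over the base point $\xi_2 \in \T$, is the step requiring the most care, though it is conceptually routine once the key inequality is in place.
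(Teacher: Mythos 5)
Your first half is correct and is exactly the paper's argument: apply \eqref{deux-et-demi} with $u = g(\e^{it_2})$ to dominate the second factor in the denominator by $\tfrac{1}{2}(1-|\chi(\e^{it_1})|)$, integrate out $t_2$, and invoke Lemma~\ref{zwei} to get $I(h) \lesssim \e^{-\tau/h}$. No issues there.

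The second half, however, has a genuine gap. You propose to deduce boundedness and compactness from a window-based Carleson criterion: ``$C_\Phi$ is bounded iff $m_\Phi$ is a Carleson measure, and compact iff $m_\Phi(S(\xi,h)) = o(h^N)$.'' For $N=2$ this is not a theorem you can invoke: on the bidisk the product-window condition $\mu\big(S(\xi_1,h)\times S(\xi_2,h)\big) \lesssim h^2$ is \emph{necessary but not sufficient} for the embedding $H^2(\D^2)\to L^2(\mu)$ to be bounded (Carleson's own counterexample; the correct characterization, due to Chang \cite{CH}, involves arbitrary open sets and is far more delicate), and there is no simple vanishing-window characterization of compactness either. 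The paper itself flags exactly this obstruction in Step~2 of the proof of Theorem~\ref{main}: the bidisk Carleson embedding theorem ``cannot be used efficiently here,'' and the whole point is to \emph{replace} it by Hilbert--Schmidt estimates. The paper's route to the ``consequently'' part is one line and entirely avoids your anticipated ``two-dimensional Carleson geometry'' difficulties: since $|\chi(z)-1|\leq 1$ for all $z$ (Proposition~\ref{cusp},\,$4)$), the constraint in $I(1)$ is vacuous, so
\begin{displaymath}
\Vert C_\Phi \Vert_{S_2}^{2} \;=\; \int_{\T^2} \frac{dm(\xi)}{\big(1-|\Phi_1^{*}(\xi)|^{2}\big)\big(1-|\Phi_2^{*}(\xi)|^{2}\big)} \;\leq\; I(1) \;<\; \infty\, ,
\end{displaymath}
i.e.\ $C_\Phi$ is Hilbert--Schmidt, hence bounded and compact. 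You should replace your Carleson-criterion step by this observation; as written, that step rests on a false (or at least unavailable) characterization, and the uniformity-over-windows verification you defer as ``conceptually routine'' is precisely where it would break down.
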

\begin{proof}
Using \eqref{deux-et-demi}, we have, thanks to \eqref{I-zero}:
\begin{displaymath} 
I (h) \leq \int_{|\chi (\e^{i t_1}) - 1|\leq h} \frac{2}{(1 - |\chi (\e^{i t_1})|)^2} \, d t_1 \, dt_2 \lesssim \e^{- \tau / h} \, .
\end{displaymath} 
In particular, $I (1) < \infty$, showing that $C_\Phi$ is Hilbert-Schmidt and hence bounded.
\end{proof}
\smallskip

For the rest of the paper, we fix a number $\sigma$ in $(0, 1)$, that for convenience we take as:
\begin{equation} \label{sigma}
\sigma = \frac{7}{8} \, \raise 1,5 pt \hbox{,}
\end{equation} 
a positive integer $j_0$ such that:
\begin{equation} \label{j-zero}
2 \, \sigma^{j_0}\leq 1/8
\end{equation} 
(i.e. $j_0 \geq 21$), and we set:
\begin{equation} 
a_j = 1 - \sigma^{j}
\end{equation} 
and:
\begin{equation} 
\qquad \rho_j = \frac{\sigma^{j}}{4} = \frac{1}{4} (1 - a_j) \, .
\end{equation} 
We also define, for $n \geq 1$ and $\theta$ being the parameter used in \eqref{def}:
\begin{equation} 
N_n = \bigg[ \frac{\log 2 n}{\theta \log 1/\sigma } \bigg] + 1 > \frac{\log 2 n}{ \log 1/\sigma} \, \cdot
\end{equation} 

The next lemma gives a cutting off for $\chi (\D)$.

\begin{lemma} \label{drei}
For every $n \geq 1$, the image $\chi (\D)$ of the cusp map, deprived of the closed Euclidean disk $\overline{D} (0, 1 - \sigma^{j_0} /K )$ and of  
$\chi (\D) \cap S(1, 1/n)$, can be covered by the open Euclidean disks $D (a_j, \rho_j)$, with $j_{0}\leq j \leq N_n$. 
\end{lemma}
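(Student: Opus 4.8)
The plan is to take an arbitrary point $w = \chi(z) = x + i y$ that survives both excisions and to exhibit a single index $j$ with $j_0 \le j \le N_n$ such that $w \in D(a_j, \rho_j)$, i.e.\ $|w - a_j| < \rho_j = \sigma^j/4$. The governing quantity throughout is $X := |1 - w|$. The assumption that $w$ lies outside the Carleson window $S(1, 1/n)$ reads exactly $X > 1/n$. For the other excision I would invoke Proposition~\ref{cusp},\,$2)$ in the form $X \le K\,(1 - |w|)$, so that $|w| \le 1 - X/K$; since $w \notin \overline D\big(0, 1 - \sigma^{j_0}/K\big)$ means $|w| > 1 - \sigma^{j_0}/K$, this forces $X < \sigma^{j_0}$. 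Hence the two removals translate into the single two-sided estimate $1/n < X < \sigma^{j_0}$.

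Next I would select the index by scale matching: let $j$ be the unique integer with $\sigma^{j+1} \le X < \sigma^{j}$. Since then $\sigma^{j+1} \le X < \sigma^{j_0}$ and $\sigma < 1$, we get $j + 1 > j_0$, whence $j \ge j_0$. On the other hand $1/n < X < \sigma^{j}$ gives $(1/\sigma)^{j} < n < 2n$, so $j\,\log(1/\sigma) < \log(2n)$, that is $j < \log(2n)/\log(1/\sigma) < N_n$ by the very definition of $N_n$ together with $0 < \theta < 1$. Thus the chosen $j$ satisfies $j_0 \le j \le N_n$, as required.

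It remains to verify the containment $|w - a_j| < \rho_j$, which is the computational heart of the lemma. Writing $h = 1 - x \ge 0$, Proposition~\ref{cusp},\,$6)$ gives $|y| \le 2 h^2$, while by definition $X^2 = h^2 + y^2$; since $a_j = 1 - \sigma^{j}$ one computes $|w - a_j|^2 = (\sigma^{j} - h)^2 + y^2$. Condition~\eqref{j-zero} yields $\sigma^{j_0} \le 1/16$, so $h \le X < \sigma^{j} \le \sigma^{j_0} \le 1/16$; feeding $y^2 \le 4h^4$ into $X^2 = h^2 + y^2$ gives $X^2 \le h^2(1 + 4h^2)$, hence $h \ge X\,(1 + 4h^2)^{-1/2} \ge X\,(1 + 1/64)^{-1/2}$, so $h$ stays within a factor very close to $1$ of $X$. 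Combined with $X \ge \sigma^{j+1} = \tfrac{7}{8}\sigma^{j}$ this pins $h$ between a small multiple of $\sigma^{j}$ and $\sigma^{j}$, forcing $0 \le \sigma^{j} - h \le (1 - \tfrac{7}{8}(1+1/64)^{-1/2})\,\sigma^{j}$, whose square is below $0.018\,\sigma^{2j}$; meanwhile $y^2 \le 4h^4 \le 4\sigma^{4j} \le 4\sigma^{2j}\sigma^{2 j_0} \le \sigma^{2j}/64$. Adding the two contributions gives $|w - a_j|^2 < 0.034\,\sigma^{2j} < \sigma^{2j}/16 = \rho_j^2$, completing the cover. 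The main obstacle is precisely this last estimate: the radii $\rho_j = \sigma^{j}/4$ are only a fixed fraction of the spacing between consecutive centers, so the covering is tight, and one must simultaneously exploit the cusp-shape bound $|y| \le 2(1-x)^2$ and the explicit smallness $\sigma^{j_0} \le 1/16$ to keep every constant safely below $1/4$; any cruder bound would fail to close the inequality.
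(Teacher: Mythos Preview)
Your proof is correct and follows the same strategy as the paper's: translate the two excisions into a two-sided bound on the distance to $1$, pick the geometric scale index $j$, and then use the cusp-shape estimate of Proposition~\ref{cusp}\,$6)$ to control the displacement from $a_j$. The one difference is the choice of scaling variable: you index by $X = |1 - w|$ via $\sigma^{j+1} \le X < \sigma^j$, while the paper indexes by $h = 1 - \Re w$ via $\sigma^{j+1} < h \le \sigma^j$. Because the centers $a_j$ are real, the paper's choice makes the horizontal bound immediate, $0 \le x - a_j < (1-\sigma)\sigma^j = \sigma^j/8$, and then $|y| \le 2h^2 \le 2\sigma^{2j} \le \sigma^j/8$ (from $2\sigma^{j_0} \le 1/8$) closes the estimate $|w - a_j| < \sigma^j/4$ exactly, with no numerical slack needed. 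Your route via $X$ forces the extra step of comparing $h$ to $X$ and tighter constants, which is why the computation feels more delicate than it is; but it reaches the same conclusion.
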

\begin{proof}
Let $z \in \D$ such that $|\chi (z)|> 1 - \sigma^{j_0} / K$ and $|\chi (z) - 1| > 1/n$. We write $\chi (z) = x + i y =: 1 - h + i y$. 

Let $j$ with $a_j \leq x < a_{j + 1}$, i.e. $\sigma^{j +1} < h \leq \sigma^j$. We have $j \geq j_{0}$, since $h < \sigma^{j_{0}}$. 

Now, since $0 \leq x - a_j < a_{j + 1} - a_j  = \sigma^{j + 1} - \sigma^j$, that $y^2 \leq 4 h^4$ (Proposition~\ref{cusp},\,$6)$), and $h \leq \sigma^j$, 
we have:
\begin{displaymath} 
|\chi (z) - a_j|^2 < (\sigma^{j} - \sigma^{j + 1})^2 + y^2 \leq (1 - \sigma)^2 \sigma^{2 j} + 4 \, \sigma ^{4 j} \, ;  
\end{displaymath} 
hence:
\begin{displaymath} 
|\chi (z) - a_j| < \sigma^j (1 - \sigma) + 2 \, \sigma^{2 j} = \sigma^j (1 - \sigma + 2 \, \sigma^j)  \, .
\end{displaymath} 
Subsequently, since $1 - \sigma = 1/8$, $j \geq j_0$,  and $2 \, \sigma^{j_0} \leq 1/8$:
\begin{displaymath} 
|\chi (z) - a_j| < \sigma^j (1 - \sigma + 2 \, \sigma^{j_0}) \leq  \frac{\sigma^j}{4\,} = \rho_j \, , 
\end{displaymath} 
showing that $\chi (z) \in D (a_j, \rho_j)$. 

Moreover, we have $j \leq N_n$. Indeed, if $j > N_n$, we would have:
\begin{displaymath} 
|\chi (z) - 1| \leq |\chi (z) - a_j|+ (1 - a_j) \leq \rho_j + \sigma^j = \frac{5}{4} \, \sigma^j \leq \frac{5}{4}\, \sigma^{N_n + 1} 
\leq 2\, \sigma^{N_n} \leq 1/n \, ,
\end{displaymath} 
contradicting the fact that $\chi (z) \notin S (1, 1 /n)$. 
\end{proof}
\smallskip

Our next two lemmas give estimates on derivatives for the functions belonging to $H^2 (\D^2)$.

\begin{lemma} \label{vier} 
Let $f \in H^{2} (\D^2)$, $k$ a non-negative integer, $b \in \D$, and let  $h_k (z) = (D_{2}^{\, k}f) (z, z)$. Then:
\begin{displaymath} 
|h_{k} (b)| \leq \frac{k! \, 2^{k + 1}}{(1 - |b|)^{k + 1}} \, \Vert f \Vert_2 \, .
\end{displaymath} 
\end{lemma}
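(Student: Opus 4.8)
The plan is to bound the iterated derivative $D_2^k f$ at an arbitrary point using the reproducing kernel of $H^2(\D^2)$, and then specialize to the diagonal point $(b,b)$. First I would recall that for fixed $z_1$, the value $(D_2^k f)(z_1,w)$ is obtained by differentiating $f$ in the second variable, and that pointwise evaluation of such a derivative is a bounded linear functional on $H^2(\D^2)$, represented by a derivative of the reproducing kernel $K_a$. Concretely, writing $a=(a_1,a_2)$, I would use that
\begin{displaymath}
(D_2^k f)(a) = \langle f,\, \overline{D_2^k}\,K_a \rangle,
\end{displaymath}
where the conjugate-differentiated kernel $\overline{D_2^k} K_a$ has the explicit factored form coming from
\begin{displaymath}
K_a(z) = \frac{1}{1-\overline{a_1}z_1}\cdot\frac{1}{1-\overline{a_2}z_2},
\end{displaymath}
so that differentiating $k$ times in the $\overline{a_2}$-slot produces a factor $k!\,z_2^{\,k}/(1-\overline{a_2}z_2)^{k+1}$ times the unchanged first factor. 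Computing the $H^2(\D^2)$-norm of this representing function by summing the resulting geometric-type series (or equivalently by factoring the norm over the two independent disk variables) gives
\begin{displaymath}
\|\overline{D_2^k}K_a\|_2 = \frac{k!}{\sqrt{1-|a_1|^2}}\cdot\frac{\big(\text{poly in }|a_2|\big)^{1/2}}{(1-|a_2|^2)^{k+1/2}},
\end{displaymath}
which after a crude bound is at most $k!\,/\big[(1-|a_1|^2)^{1/2}(1-|a_2|^2)^{k+1/2}\big]$ up to a harmless constant.

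Next I would specialize to $a=(b,b)$, so that $h_k(b)=(D_2^k f)(b,b)$, and apply Cauchy--Schwarz to get $|h_k(b)|\le \|f\|_2\,\|\overline{D_2^k}K_{(b,b)}\|_2$. Inserting the kernel-norm estimate and using the elementary inequalities $1-|b|^2 = (1-|b|)(1+|b|)\ge 1-|b|$ and $1+|b|\le 2$ to convert the $(1-|b|^2)$ factors into powers of $(1-|b|)$, I would absorb the various numerical constants into the clean power $2^{k+1}$ appearing in the statement. The target bound
\begin{displaymath}
|h_k(b)| \le \frac{k!\,2^{k+1}}{(1-|b|)^{k+1}}\,\|f\|_2
\end{displaymath}
then follows, since each of the two kernel factors contributes a power of $(1-|b|)^{-1}$ (one of order $k+1$ from the differentiated slot and a lower order from the undifferentiated slot, the latter being dominated after bounding $(1-|b|)^{-1/2}\le (1-|b|)^{-1}$ for $|b|$ bounded away from a point, or simply absorbed into the $2^{k+1}$ budget).

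The main obstacle I expect is purely bookkeeping rather than conceptual: one must verify that evaluation of the $k$-th partial derivative is genuinely represented by $\overline{D_2^k}K_a$ and then compute that kernel's norm sharply enough that the final constant collapses to exactly $2^{k+1}$ rather than something growing faster in $k$. The cleanest route is to exploit the tensor factorization $H^2(\D^2)=H^2(\D)\,\widehat\otimes\,H^2(\D)$, which reduces the two-variable kernel-norm computation to a one-variable computation of $\|D^k k_{b}\|_{H^2(\D)}$, where $k_b$ is the Szegő kernel of the disk; bounding that single factor carefully and multiplying by the elementary $\|k_b\|_{H^2(\D)}=(1-|b|^2)^{-1/2}$ from the undifferentiated variable is what forces the precise power $k+1$ and the factor $k!$. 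Care with the numerical constant $2^{k+1}$ is the only delicate point.
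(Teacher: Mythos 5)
Your approach is correct, but it takes a genuinely different route from the paper's. The paper argues more elementarily: it applies the Cauchy inequalities to $f$ on the bi-circle $|w_1-b|=|w_2-b|=s$ with the radius $s=\frac{1-|b|}{2}$, bounds $\sup|f(w_1,w_2)|$ there by $\frac{2}{1-|b|}\Vert f\Vert_2$ using the pointwise kernel estimate \eqref{asa} applied to $f$ itself (not to its derivatives), and then specializes to $\alpha=(0,k)$; the constant $2^{k+1}$ falls out immediately as $2^k$ (from $s^{-k}$) times $2$ (from the sup bound), with no kernel-norm computation at all. Your route via the differentiated kernel and Cauchy--Schwarz also works, but it obliges you to carry out the norm computation you only sketch: in one variable, $\bigl\Vert k!\,z^k/(1-\bar{b}z)^{k+1}\bigr\Vert_2^2=(k!)^2\sum_{n\ge 0}\binom{n+k}{k}^2|b|^{2n}=(k!)^2\,P_k(|b|^2)/(1-|b|^2)^{2k+1}$, where $P_k(x)=\sum_{j=0}^k\binom{k}{j}^2x^j$. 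One caveat: the factor you dismiss as a ``harmless constant'' is not absolute --- $\sqrt{P_k(|b|^2)}\le\sqrt{P_k(1)}=\sqrt{\tbinom{2k}{k}}\approx 2^k/(\pi k)^{1/4}$ grows geometrically in $k$. It does fit inside the $2^{k+1}$ budget of the statement (together with $(1-|b|^2)^{k+1}\ge(1-|b|)^{k+1}$), which is precisely the delicate point you flagged at the end, so your plan does close; but only once this binomial-sum estimate is actually performed, and your intermediate claim that the crude bound loses only an absolute constant is, strictly speaking, false. What each approach buys: the paper's proof is shorter and constant-transparent, which is presumably why it was chosen; yours, in exchange, identifies the exact representing function of the derivative-evaluation functional and yields a marginally sharper constant, and it would adapt to settings where the Cauchy-inequality-plus-growth-estimate device is unavailable.
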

\begin{proof}
The Cauchy inequalities give for $0 < s < 1 - |b|$ and $\alpha \in \N^2$: 
\begin{displaymath} 
|D^{\alpha} f (b, b)| \leq \frac{\alpha!}{s^{|\alpha|}} \sup_{|w_1 - b| = s, |w_2 - b| = s} |f (w_1, w_2)| \, .
\end{displaymath} 
The choice $s = \frac{1 - |b|}{2}$ gives $1-|w_j|\geq \frac{1-|b|}{2}$ for $|w_j - b| = s$, $j = 1, 2$; hence, thanks to the estimate \eqref{asa}: 
\begin{displaymath} 
|f (w_1, w_2)| \leq   \frac{\Vert f \Vert_2}{\sqrt{(1 - |w_1|)(1 - |w_2|)}} \leq \frac{2}{1 - |b|} \, \Vert f \Vert_2 \, \cdot
\end{displaymath} 
Specializing to $\alpha = (0, k)$ now gives the result.
\end{proof}

\begin{lemma} \label{funf}
With the notations of Lemma~\ref{vier}, assume that $h_{k}^{(l)} (a) = 0$ for some $a \in \D$ and for $0 \leq l < n$. Then, for $0< \rho < 1$ and 
$|b - a| \leq \frac{\rho}{2} \, (1 - |a|)$, it holds: 
\begin{displaymath} 
|h_{k} (b)| \leq \rho^n \, \frac{k! \, 4^{k + 1}}{(1 - |a|)^{k + 1}} \, \Vert f \Vert_2 \, .
\end{displaymath} 
\end{lemma}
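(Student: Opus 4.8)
The plan is to combine Lemma~\ref{vier} with a Schwarz-type lemma argument, exploiting the vanishing hypothesis on $h_k$ at the point $a$. The key observation is that, after recentering at $a$, the vanishing of $h_k$ and its first $n-1$ derivatives means that $h_k$ has a zero of order at least $n$ at $a$, which forces rapid decay near $a$ in proportion to the distance from $a$ measured in the hyperbolic-type scale $(1-|a|)$.

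First I would introduce the disk automorphism $\psi$ of $\D$ sending $0$ to $a$, namely $\psi(\zeta) = \frac{\zeta + a}{1 + \overline{a}\,\zeta}$, and consider the composition $F(\zeta) = h_k(\psi(\zeta))$, which is holomorphic on $\D$ and vanishes to order $\geq n$ at $\zeta = 0$ by hypothesis. By the Schwarz lemma applied to the function $F(\zeta)/\zeta^n$ (or equivalently by writing $F(\zeta) = \zeta^n G(\zeta)$ with $G$ holomorphic and bounded), I would get $|h_k(\psi(\zeta))| \leq |\zeta|^n \sup_{|w|\le r} |h_k(w)|$ on suitable subdisks. The natural radius to use is a fixed value like $r = 1/2$: on the set $|\zeta| \leq 1/2$ the image $\psi(\zeta)$ stays in a disk where $1 - |\psi(\zeta)| \gtrsim 1 - |a|$, so that Lemma~\ref{vier} bounds $\sup |h_k|$ on that image by a constant times $\frac{k!\,2^{k+1}}{(1-|a|)^{k+1}}\Vert f\Vert_2$, picking up an extra factor of $2^{k+1}$ to produce the $4^{k+1}$ in the statement.

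Next I would translate the hypothesis $|b - a| \leq \frac{\rho}{2}(1-|a|)$ into a bound on the preimage $\zeta_0 = \psi^{-1}(b)$. Since $\psi^{-1}(b) = \frac{b - a}{1 - \overline{a}\,b}$, and $|1 - \overline{a}\,b| \geq 1 - |a|$ while $|b - a| \leq \frac{\rho}{2}(1-|a|)$, one obtains $|\zeta_0| \leq \frac{\rho}{2} \cdot \frac{1-|a|}{|1-\overline a b|} \leq \frac{\rho}{2}\le \rho$; in particular $\zeta_0$ lies in the disk of radius $1/2$ where the Schwarz estimate is valid (using $\rho<1$). Feeding $|\zeta_0| \leq \rho$ into the inequality $|h_k(b)| = |h_k(\psi(\zeta_0))| \leq |\zeta_0|^n \sup|h_k| \leq \rho^n \cdot \frac{k!\,4^{k+1}}{(1-|a|)^{k+1}}\Vert f\Vert_2$ would yield the claim.

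The main obstacle I anticipate is bookkeeping the constants cleanly so that the factor emerging from applying Lemma~\ref{vier} on the enlarged disk (where $1-|w| \gtrsim 1-|a|$ with an explicit constant) combines with the $2^{k+1}$ of that lemma to give exactly $4^{k+1}$ rather than a larger power, and verifying that the Schwarz-lemma subdisk radius $1/2$ is genuinely compatible with the hypothesis $|\zeta_0| \le \rho$ for all $\rho \in (0,1)$. One must check that on $|\zeta|\le 1/2$ one indeed has $1 - |\psi(\zeta)| \geq \tfrac{1}{3}(1-|a|)$ or a similar explicit bound, so that $\sup_{|\zeta|\le 1/2}|h_k(\psi(\zeta))|$ is controlled by Lemma~\ref{vier} with $|b|$ replaced by a point whose distance to the boundary is comparable to $1-|a|$; this comparability is a standard but slightly delicate computation with the automorphism that deserves care to get the exponent of $4$ exactly right.
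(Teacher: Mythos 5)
Your skeleton is the same as the paper's: recenter at $a$, use Lemma~\ref{vier} to bound $h_k$ uniformly on a disk around $a$ of radius comparable to $1-|a|$, then exploit the order-$n$ zero via the Schwarz lemma. But the two points you defer as ``delicate bookkeeping'' are not bookkeeping; as set up, your argument does not yield the stated constant $4^{k+1}$, and one of your displayed inequalities is false as written. First, the Schwarz step on a subdisk: if $F$ is holomorphic and vanishes to order $n$ at $0$, the correct estimate on $|\zeta|\le r$ is $|F(\zeta)|\le \bigl(|\zeta|/r\bigr)^n\sup_{|w|\le r}|F(w)|$; the factor $r^{-n}$ cannot be dropped (test $F(\zeta)=\zeta^n$). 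This particular defect is reparable in your setting, because you have the slack $|\zeta_0|\le\rho/2$: with $r=1/2$ one gets $(|\zeta_0|/r)^n=(2|\zeta_0|)^n\le\rho^n$.

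The genuine gap is the distortion constant of the M\"obius recentering $\psi(\zeta)=\frac{\zeta+a}{1+\overline a\,\zeta}$. The sharp bound is $\min_{|\zeta|\le 1/2}\bigl(1-|\psi(\zeta)|\bigr)=\frac{1-|a|}{2+|a|}$, which degenerates to $\frac{1-|a|}{3}$ as $|a|\to1$ (and in the application $a=a_j\to 1$). So your hoped-for bound $1-|\psi(\zeta)|\ge\frac13(1-|a|)$ is true, but through Lemma~\ref{vier} it produces the constant $2^{k+1}\cdot 3^{k+1}=6^{k+1}$, not $4^{k+1}$; the bound $1-|\psi(\zeta)|\ge\frac12(1-|a|)$ that your first paragraph implicitly needs is false. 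Nor can you trade radii: the Schwarz correction forces $r\ge 1/2$, while keeping the distortion factor $\le \frac12$ forces $\frac{2(1+r)}{1-r}\le 4$, i.e.\ $r\le 1/3$ --- incompatible. The paper sidesteps all of this by recentering with the \emph{affine} map $H_k(w)=h_k\bigl(a+w\,\frac{1-|a|}{2}\bigr)$, $w\in\D$: then $1-\bigl|a+w\,\frac{1-|a|}{2}\bigr|\ge\frac{1-|a|}{2}$ on the whole disk, so Lemma~\ref{vier} gives $\Vert H_k\Vert_\infty\le k!\,4^{k+1}(1-|a|)^{-(k+1)}\Vert f\Vert_2$, the Schwarz lemma applies on the full disk with no $r^{-n}$ correction, and $w=\frac{2(b-a)}{1-|a|}$ satisfies $|w|\le\rho$, giving exactly the stated inequality. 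To be fair, your weaker conclusion with $6^{k+1}$ would still suffice downstream in Lemma~\ref{cle} (only some absolute base is needed there, after adjusting $j_0$), but it does not prove the lemma as stated.
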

\begin{proof} 
We may assume $\Vert f \Vert_2 \leq 1$. Consider the function defined, for $w \in \D$,  by:
\begin{displaymath} 
H_{k} (w) = h_{k} \bigg( a + w \, \frac{1 - |a|}{2} \bigg) \, .
\end{displaymath} 
It is a bounded and holomorphic function in $\D$. 

For $w \in \D$, let $\beta = a + w \, \frac{1 - |a|}{2}$, which satisfies $1 - |\beta| \geq \frac{1 - |a|}{2}\,$. Lemma~\ref{vier} gives:
\begin{displaymath} 
|H_{k} (w)| = |h_{k} (\beta)| \leq  \frac{k! \, 4^{k + 1}}{(1 - |a|)^{k + 1}}  \, \cdot
\end{displaymath} 
Now, $H_{k}^{(l)} (0) = h_{k}^{(l)} (a) = 0$ for $0 \leq l < n$; hence the Schwarz lemma says that $H_k$ satisfies 
$|H_{k} (w)| \leq |w|^n \, \Vert H_k \Vert_\infty$ for all $w \in \D$.  Take $w = \frac{2 (b - a)}{1 - |a|}\,$, which satisfies $|w| \leq \rho$, to get:
\begin{displaymath} 
|h_{k} (b)| =|H_{k}(w)| \leq |w|^n \, \Vert H_k \Vert_\infty \leq \rho^n  \, \frac{k! \, 4^{k + 1}}{(1 - |a|)^{k + 1}} \, \cdot \qedhere
\end{displaymath} 
\end{proof}
%

\section{The main result} \label{section main} 

Recall that $\chi$ is the cusp map and that $\phi$ is defined in \eqref{def}. The map $g$ appearing in the formula below plays an inert role, and is just designed 
to ensure that $\Phi$ is non-degenerate; we can take, for example $g (z_2) = z_2$. This seems to mean that non-degeneracy is not the only issue in the question 
of estimating $\beta_{2}^{+} (C_\Phi)$. 
\smallskip

Our example  appears as a perturbation of the diagonal map defined by $\Delta (z_1, z_2) = \big( \chi (z_1), \chi (z_1) \big)$ for which we already know 
(\cite[Theorem~2.4]{DHL-surjective}) that $\Delta (1, 1) = (1, 1)$ and  $\beta_{2}^{+} (C_\Delta) < 1$. This map is degenerate, but the perturbation clearly 
gives a non degenerate one since its Jacobian is $J_{\Phi} (z_1, z_2) = c \, (\varphi \circ \chi) (z_1) \, \chi ' (z_1) \, g' (z_2)$.  

\begin{theorem} \label{main}  
Let:
\begin{displaymath} 
\Phi (z_1, z_2) = \big( \chi (z_1), \chi (z_1) + c \, (\varphi \circ \chi) (z_1) \, g (z_2) \big)
\end{displaymath} 
be the function defined in \eqref{def-Phi}. 

Then: 
\begin{enumerate}
\setlength\itemsep{2 pt}

\item [$1)$] $\Phi (\D^2)\subseteq \D^2$ and $C_\Phi \colon H^{2} (\D^2) \to H^{2} ( \D^2)$ is compact;
 
\item [$2)$] $\Phi$ is non degenerate, and its components belong to the bidisk algebra; 

\item [$3)$] $\Phi (\T^2) \cap {\T^2} = \Phi (\T^2) \cap \partial{}_{e}\D^2 \neq\emptyset$; 

\item [$4)$] $a_{n^2} (C_\Phi) \lesssim \exp(- \tau n)$, for some $\tau > 0$, implying $\beta_{2}^{+} (C_\Phi) < 1$.
 \end{enumerate} 
\end{theorem}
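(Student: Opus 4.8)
The plan is as follows. Statements $1)$ and $2)$ are immediate: $\Phi(\D^2)\subseteq\D^2$ is Lemma~\ref{eins}, compactness of $C_\Phi$ is Corollary~\ref{Phi-HS}, the components lie in $A(\D^2)$ because $\chi,\varphi\circ\chi,g\in A(\D)$, and non-degeneracy follows from $J_\Phi(z_1,z_2)=c\,(\varphi\circ\chi)(z_1)\,\chi'(z_1)\,g'(z_2)\not\equiv 0$ (take $g(z_2)=z_2$). For $3)$, since $\chi(1)=1$ by Proposition~\ref{cusp},\,$4)$ and $\varphi(\chi(1))=\varphi(1)=0$, we get $\Phi(1,1)=(1,1)\in\T^2=\partial_{e}\D^2$; continuity of $\Phi$ on $\overline{\D^2}$ exhibits the boundary contact. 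The heart is $4)$. Writing $w_1=\chi(z_1)$ and $w_2=w_1+c\,(\varphi\circ\chi)(z_1)\,g(z_2)$ and Taylor-expanding $f$ in its second variable about the diagonal point $(w_1,w_1)$ yields the representation
\[ C_\Phi f(z_1,z_2)=\sum_{k=0}^{\infty}\frac{c^k}{k!}\,(\varphi\circ\chi)(z_1)^k\,g(z_2)^k\,h_k(\chi(z_1)),\qquad h_k(w)=(D_2^{\,k}f)(w,w), \]
so that $C_\Phi=\sum_{k\ge 0}A_k$ with $A_kf=\frac{c^k}{k!}\,(\varphi\circ\chi)^k\,(h_k\circ\chi)\otimes g^k$; taking $g(z_2)=z_2$ makes the ranges of the $A_k$ mutually orthogonal, so \eqref{sa} lets us add up their approximation ranks.

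First I would control the truncation. The factor $2$ built into \eqref{un} gives $|w_2-w_1|=c\,|(\varphi\circ\chi)(z_1)|\,|g(z_2)|\le \tfrac12\bigl(1-|\chi(z_1)|\bigr)=\tfrac12(1-|w_1|)$, so the Cauchy estimates on the disk of radius $(1-|w_1|)$ around $w_1$ force the Taylor remainder past $K$ terms to decay like $2^{-K}(1-|\chi(z_1)|)^{-1}$ in modulus. Integrating the square of this over $\T$ against $\int_{\T}(1-|\chi|)^{-2}\,dt<\infty$ (Lemma~\ref{zwei} with $h=1$, since $|\chi-1|\le 1$ by Proposition~\ref{cusp},\,$4)$) gives $\bigl\|\sum_{k>K}A_k\bigr\|\lesssim 2^{-K}$; choosing $K_n\approx n$ makes this tail $\lesssim \e^{-\tau n}$. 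It then remains to approximate each $A_k$, $k\le K_n$, by a finite-rank operator.

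For fixed $k$ I would approximate the holomorphic function $h_k$ on $\chi(\D)$. In the defining boundary integral of $\|A_kf\|^2$ the part over $\{|\chi(\e^{it_1})-1|\le 1/n\}$ is $\lesssim \e^{-\tau n}\|f\|_2^2$ by Corollary~\ref{Phi-HS}, and is simply discarded. On the complementary region Lemma~\ref{drei} covers $\chi(\D)$ by the $\approx\log n$ disks $D(a_j,\rho_j)$, $j_0\le j\le N_n$, with $\rho_j=\tfrac14(1-a_j)$ and $1-a_j=\sigma^{j}$. On $D(a_j,\rho_j)$ I replace $h_k$ by its Taylor polynomial at $a_j$ of degree $<m_{j,k}$; the difference has its first $m_{j,k}$ derivatives vanishing at $a_j$, so Lemma~\ref{funf} with $\rho=\tfrac12$ bounds it by $2^{-m_{j,k}}\frac{k!\,4^{k+1}}{(1-a_j)^{k+1}}\|f\|_2$ there. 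Weighting by $(\varphi\circ\chi)^k\lesssim\exp(-\delta'\sigma^{-\theta j})$ (from \eqref{ob} and Proposition~\ref{cusp}) and integrating against the exponentially small preimage measure of $D(a_j,\rho_j)$ produces a controlled $H^2$-error, while the retained Taylor coefficients assemble into a finite-rank operator of rank $\sum_{j,k}m_{j,k}$.

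The decisive step is the joint choice of the degrees $m_{j,k}$ so that the total error stays $\lesssim\e^{-\tau n}$ while the total rank stays below $n^2$. Requiring each weighted residual to be $\lesssim\e^{-\tau n}$ forces $m_{j,k}\gtrsim \tau n+(k+1)\,j\log(1/\sigma)-k\,\delta'\sigma^{-\theta j}-\tfrac{c}{2}\sigma^{-j}$, and $m_{j,k}=0$ once this is negative. The exponential gains $\sigma^{-\theta j}$ and $\sigma^{-j}$ confine the effective range to $j\lesssim\log n$ and, crucially, for each such $j$ to only $k\lesssim n\,\sigma^{\theta j}$ indices; summing, $\sum_{j,k}m_{j,k}\approx \tau^2 n^2\sum_{j\ge j_0}\sigma^{\theta j}\lesssim n^2$, the geometric series in $j$ being exactly what prevents a spurious $\log n$ factor (which would otherwise degrade the estimate to $\beta_{2}^{+}=1$). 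Carrying out this two-parameter balancing — matching the Taylor-in-$z_2$ truncation against the disk-by-disk Taylor-in-$z_1$ truncation and verifying the weighted integrals sum correctly — is the main obstacle. Once done, one obtains an operator of rank $<n^2$ at distance $\lesssim\e^{-\tau n}$ from $C_\Phi$, whence $a_{n^2}(C_\Phi)\lesssim\e^{-\tau n}$ and $\beta_{2}^{+}(C_\Phi)\le\e^{-\tau}<1$.
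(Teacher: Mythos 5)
Your overall architecture for item $4)$ --- Taylor expansion of $f$ in the second variable about the diagonal point $(\chi(z_1),\chi(z_1))$, the covering of Lemma~\ref{drei}, truncation via Lemma~\ref{funf}, discarding the cusp window via Corollary~\ref{Phi-HS}, and the rank count $\sum_j n\cdot n\,\sigma^{\theta j}\lesssim n^2$ --- is essentially the paper's own mechanism, recast as an explicit finite-rank approximation instead of a Gelfand-number (codimension) argument; items $1)$--$3)$ are handled exactly as in the paper. But there is a genuine hole: the central region. Lemma~\ref{drei} covers $\chi(\D)$ only \emph{after} removing the closed disk $\overline{D}(0,1-\sigma^{j_0}/K)$ as well as $\chi(\D)\cap S(1,1/n)$. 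Your scheme discards the window $\{|\chi(\e^{it_1})-1|\le 1/n\}$ (fine, by Lemma~\ref{zwei} and Corollary~\ref{Phi-HS}) and builds Taylor approximants on the disks $D(a_j,\rho_j)$, but it says nothing about the boundary points $t_1$ for which $\chi(\e^{it_1})$ lies in $\overline{D}(0,1-\sigma^{j_0}/K)$. That set of $t_1$'s has measure bounded below by a positive constant independent of $n$, and there your finite-rank operator does not approximate $A_k$ at all, so the error is of the order of $\Vert A_k\Vert$, which does not depend on $n$. This is precisely the piece the paper treats separately as $T_1$ (the restriction of $m_\Phi$ to $\overline{\lambda\D^2}$, $\lambda=1-\sigma^{j_0}/2K$), using the codimension-$n^2$ subspace $V=z_1^nH^2(\D^2)+z_2^nH^2(\D^2)$, which yields an error $\lambda^n$. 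You need an analogous rank-$O(n^2)$ block; note that Lemma~\ref{funf} with center $a=0$ cannot supply it, since it only controls points $b$ with $|b-a|\le\frac{\rho}{2}(1-|a|)<\frac12$, whereas the central disk has radius $1-\sigma^{j_0}/K$, close to $1$.

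A second, smaller problem is the tail estimate $\Vert\sum_{k>K}A_k\Vert\lesssim 2^{-K}$. It does not follow from the tools you invoke: combining $|\Delta|\le\frac12(1-|w_1|)$ (the factor $2$ of \eqref{un}) with Lemma~\ref{vier}, i.e. $\frac{|h_k(w_1)|}{k!}\le\frac{2^{k+1}}{(1-|w_1|)^{k+1}}\Vert f\Vert_2$, bounds the $k$-th term of the series by $\frac{2}{1-|w_1|}\Vert f\Vert_2$, which is constant in $k$: the majorizing series is exactly borderline divergent. To get a geometric tail you must either redo the Cauchy estimate at radius $(1-\eps)(1-|w_1|)$ (admissible since $|f(w_1,\cdot)|$ blows up only like $(1-|z_2|)^{-1/2}$ there), which gives the ratio $\frac{1}{2(1-\eps)}<1$ --- a harmless but necessary sharpening of Lemma~\ref{vier} --- or, as the paper does, exploit on the covered region the far stronger bound $|\Delta|\le|(\varphi\circ\chi)(z_1)|\lesssim\exp(-\tau\sigma^{-j\theta})\ll\sigma^j\approx 1-|w_1|$, which is also what makes your confinement $k\lesssim n\,\sigma^{\theta j}$ (the paper's $m_j=[n\sigma^{j\theta}]+1$) legitimate. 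With the central block added and the tail estimate repaired along either of these lines, your construction does give $a_{Cn^2}(C_\Phi)\lesssim\e^{-\tau n}$ and hence $\beta_2^{+}(C_\Phi)<1$.
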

\begin{proof}
That $\Phi$ maps $\D^2$ to itself is proved  in Lemma~\ref{eins} and that the composition operator $C_\Phi \colon H^{2} (\D^2) \to H^{2} ( \D^2)$ 
is compact, in Corollary~\ref{Phi-HS}. 
Item $2)$ is due to the presence of $g$, as explained above. The fact that $\Phi (\T^2) \cap {\T^2} \neq\emptyset$ is clear since $\Phi (1, 1) = (1, 1)$. 
It remains to prove $4)$.

Once more, the proof will be conveniently divided into several steps. We begin by a lemma which is in fact obvious, but explains well what is going on.

\begin{lemma} \label{0} 
Let $\lambda = 1 - \frac{\sigma^{j_0}}{2K}$, where $\sigma$, $K$ and $j_0$ are as in \eqref{sigma}, Proposition~\ref{cusp},\,$2)$, and \eqref{j-zero}. 
Let $r_n = 1 - \frac{1}{n}$, and let  $\mu_1$, $\mu_2$,  $\mu_3$  the respective restrictions of $m_\Phi$ to  the disk $\overline{\lambda \D^2}$, the annulus 
$r_n\D^2 \setminus \overline{\lambda \D^2}$,  and the annulus $\D^2 \setminus r_n \D^2$. We then have:
\begin{displaymath} 
C_\Phi = T_1 \oplus T_2 \oplus T_3 \, ,
\end{displaymath} 
where $T_j$ is the canonical injection of $H^{2} (\D^2)$ into $L^{2} (\mu_j)$.
\end{lemma}

This is indeed obvious since:
\begin{displaymath} 
\Vert C_{\Phi} f \Vert^2 = \int_{\D^2} |f|^2 dm_\Phi \, ,
\end{displaymath} 
and by splitting the integral into three parts.
\smallskip

We now majorize separately the numbers $a_{p} (T_j)$,  for $j = 1, 2, 3$. In the sequel, the positive constant $\tau$ may vary from one formula to another. 
\smallskip

{\sl Step 1.} It holds:
\begin{equation} \label{T-un}
a_{n^{2}} (T_1) \lesssim \e^{- \tau n} \, .
\end{equation} 
\begin{proof} 
Let $V = z_{1}^n H^{2} (\D^2) + z_{2}^n H^{2} (\D^2)$; this is a subspace of $H^{2} (\D^2)$ of codimension $\leq n^2$, since:
\begin{displaymath} 
V = \big\{ f \in H^{2} (\D^2) \, ; \ D_{1}^{\, j} D_{2}^{\, k} f (0, 0) = 0 \quad \text{for } 0 \leq j, k < n \big\} \, .
\end{displaymath} 
If $f (z) = \sum_{\max(j, k) \geq n} a_{j, k} \, z_{1}^{j} z_{2}^{k}\in V$ and $\Vert f \Vert_2 = 1$, one can write:
\begin{displaymath} 
f (z) = z_{1}^n  \, q_{1} (z_1, z_2) + z_{2}^n \, q_{2} (z_1, z_2) \, ,
\end{displaymath} 
with: 
\begin{displaymath} 
q_{1} (z) = \sum_{j \geq n, k \geq 0} a_{j, k} \, z_{1}^{j - n} z_{2}^{k} 
\quad \text{and} \quad q_{2} (z) = \sum_{j < n, k \geq n} a_{j, k} \, z_{1}^{j} z_{2}^{k - n} \, ,
\end{displaymath} 
which satisfy  $\Vert q_j \Vert_2 \leq \Vert f \Vert_2 = 1$, $j = 1,2$. 

An easy estimate now gives (since $ \max (|z_{1}|^n,  |z_{2}^n|) \leq \lambda^{n}$ on $\overline{\lambda \D^2}$):
\begin{align*}
\Vert T_{1}f \Vert^2 
& \leq 2 \, \bigg( \int_{\lambda \D^2} \big( |z_{1}^{n}|^2 |q_{1} (z_1, z_2)|^2  + |z_{2}^{n}|^2 |q_{2} (z_1, z_2)|^2\big) \, dm_\Phi \bigg) \\
& \lesssim \lambda^{2 n}\int_{\lambda \D^2} \big(|q_1|^2 +|q_2|^2 \big) \, dm_\Phi 
\lesssim \lambda^{2n} \big(\Vert q_1 \Vert_{2}^{2} + \Vert q_2 \Vert_{2}^{2}\big) \lesssim \lambda^{2 n}  \, ,
\end{align*}
since we  know by Corollary~\ref{Phi-HS} that $C_\Phi$ is bounded on $H^{2}(\D^2)$ and hence that $m_\Phi$ is a Carleson measure for $H^{2}(\D^2)$.  
Alternatively, we could majorize $|q_{j}(z_1, z_2)|$ uniformly on the support of $\mu_1$. We hence obtain:
\begin{equation} \label{unus} 
a_{n^2 + 1}(T_1) = c_{n^2 + 1} (T_1) \lesssim \e^{- \tau n} \, . \qedhere
\end{equation} 
\end{proof}
\smallskip

{\sl Step 2.} It holds:
\begin{equation} \label{T-trois}
a_{n^{2}} (T_3) \lesssim \e^{- \tau n} \, .
\end{equation} 
\begin{proof}
In one variable, we could use the Carleson embedding theorem; but this theorem for the bidisk and the Hardy space $H^{2}(\D^2)$ notably has a more 
complicated statement (\cite{CH}; see also \cite{Feff}), and cannot be used efficiently here. Our strategy  will be to replace it by a sharp estimation of a 
Hilbert-Schmidt norm.  

We set $h_n = 1 - r_n = 1/ n$.

Clearly, denoting by $S_2$ the Hilbert-Schmidt class:
\begin{displaymath} 
\Vert T_3 \Vert^2 \leq \Vert T_3 \Vert_{S_2}^{2} = \int \frac{d \mu_{3} (w)}{(1 - |w_1|^2)(1 - |w_2|^2)} 
\leq \int \frac{d\mu_{3} (w)}{(1 - |w_1|)(1 - |w_2|)} \, \cdot
\end{displaymath} 
Now, if $w = (w_1, w_2) = \big( \chi (z_1), \chi (z_1) + c \, (\varphi \circ \chi) (z_1) \, g (z_2))$ belongs to the  support of $\mu_3$, we have 
$\max (|w_1|,|w_2|) \geq r_n = 1 - h_n$, and, recalling \eqref{deux}: 
\begin{equation} \label{recall} 
|w_1| \geq 2 \, |w_2| - 1 \, ,
\end{equation} 
we have in either case $| w_1 | \geq 1 - 2 h_n$. By Proposition~\ref{cusp},\,$2)$, this implies that:
\begin{displaymath} 
| 1 - w_1 | \leq 2 \, K  h_n\, .
\end{displaymath} 
Corollary~\ref{Phi-HS} gives: 
\begin{align*}
\Vert T_3 \Vert^2 
& \lesssim \int_{|\chi (\e^{i t_1}) - 1| \leq 2 K h_n} 
\frac{dt_1 \, dt_2}{(1 - |\chi (\e^{i t_1})|)(1 - |\chi (\e^{i t_1}) + c\, (\varphi \circ \chi) (\e^{i t_1}) \, g (\e^{i t_2})|)}  \\ 
& = I (2 K h_n) \lesssim \e^{- \tau / h_n} \, . \phantom{a^{A^{B^C}}}
\end{align*}
But $h_n = 1/n$, so that:
\begin{equation} \label{duo} 
a_{n^{2}} (T_3) \leq \Vert T_3 \Vert \lesssim \e^{- \tau n} \, . \qedhere
\end{equation}
\end{proof}
\smallskip

{\sl Step 3.} It holds:
\begin{equation} \label{T-deux} 
a_{n^2} (T_2) \lesssim \e^{- \tau n} \, .
\end{equation} 

This estimate follows from the following key auxiliary lemma. In fact, this lemma will give, for the Gelfand numbers, $c_{n^{2}} (T_2) \lesssim \e^{- \tau n}$, 
and we know that they are equal to the approximation numbers.

Let $M \colon H^{2} (\D^2) \to {\rm Hol} (\D)$ be the linear map defined by:
\begin{displaymath} 
M f (z) = f (z, z) \, , 
\end{displaymath} 

Recall that $a_j = 1 - \sigma^j$ and $N_n = \Big[\frac{\log 2n}{\theta\log 1/\sigma} \Big] + 1$. 
\begin{lemma} \label{cle} 
Let $E$ be the closed subspace of $H^{2} (\D^2)$ defined by: 
\begin{displaymath}
\begin{split} 
E = \Big\{ f \in H^{2} (\D^2) \, ; \  \big[ M (D_{2}^{\, k} & f) \big]^{(l)} (a_j) = 0 \\ 
& \text{ for }\, 0 \leq l < n,\ 0 \leq k \leq m_j,\ 1 \leq j \leq N_n\Big\} \, .
\end{split}
\end{displaymath} 

Then, we can adjust the numbers $m_j$ so as to guarantee that, for some positive constant $\tau$:
\begin{displaymath} 
{\rm codim}\, E \lesssim n^2
\end{displaymath} 
and, for all $f \in E$ with $\Vert f \Vert_2 \leq 1$:
\begin{displaymath} 
\Vert T_{2} (f) \Vert_{2} \lesssim \exp (- \tau n)  \, .
\end{displaymath} 
\end{lemma}
 \goodbreak
\begin{proof}
This is the most delicate part. 

Recall that:
\begin{displaymath} 
h_n = 1 / n \, ,\quad   r_n = 1 - h_n \, , \quad  \lambda = 1 - \frac{\sigma^{j_0}}{2 \, K} \, \cdot  
\end{displaymath} 
We need a uniform estimate of $|f (w)|$ for $f \in E$ with $\Vert f \Vert_2 \leq 1$ and for:
\begin{displaymath} 
w = (w_1,w_2) \in {\rm supp}\, m_\Phi \cap (r_n \D^2 \setminus \overline{\lambda \D^2}) \, .  
\end{displaymath} 
This estimate will be given by  Lemma ~\ref{drei}, Lemma~\ref{vier} and Lemma~\ref{funf}. Note that we have:
\begin{displaymath} 
\chi (z_1) \in \D \setminus [ S (1, 1/n) \cup \overline{D} (0, 2 \lambda - 1) ] \, .
\end{displaymath} 
Indeed, if $(w_1, w_2) = \Phi (z_1, z_2) \notin \lambda \D^2$, we have $\max (|w_1|, |w_2|) > \lambda$; so either 
$|w_1| > \lambda \geq 2 \lambda - 1$, or $|w_2| > \lambda$ and again $|w_1| > 2 \lambda - 1$ since $|w_1| \geq 2 \, |w_2| - 1$, by 
\eqref{deux}. Hence $w_1 \notin \overline{D} (0, 2 \lambda - 1)$.  Moreover, we have  $|1 - w_1|\geq 1 - |w_1| >  1 / n$, so 
$w_1 \notin S (1, 1 / n)$. 

Using Lemma~\ref{drei}, select $j_{0} \leq j \leq N_n$ such that $|\chi (z_1) - a_j| \leq \frac{1}{4} (1 - a_j)$. Now set:
\begin{displaymath} 
A = \big( \chi(z_1), \chi (z_1)) \quad  \text{and} \quad  \Delta = \big( 0,  (\varphi \circ \chi) (z_1) \, g (z_2) \big) \, . 
\end{displaymath} 

Our strategy will be the following. We write:
\begin{align*}
f  [\Phi (z_1,z_2) ] = f (A +\Delta) 
& =\sum_{k = 0}^\infty \frac{D_{2}^{\, k} f (A)}{k!} \, \Delta^k \\
& =\sum_{k = 0}^\infty \frac{M (D_{2}^{\, k}f) [ \chi (z_1) ]}{k!} \, \Delta^k 
=\sum_{k = 0}^\infty \frac{h_k [ \chi (z_1) ]}{k!} \, \Delta^k \, ,
\end{align*}
with $h_k = M (D_{2}^{\, k} f)$, and we put:
\begin{displaymath} 
S_j = \sum_{k = 0}^{m_j} \frac{h_k [\chi (z_1)]}{k!} \, \Delta^k
\end{displaymath} 
and 
\begin{displaymath} 
\phantom{M}  R_j = \sum_{k > m_j} \frac{h_k [\chi (z_1) ] }{k!} \, \Delta^k \, .
\end{displaymath} 

We will estimate separately $S_j$ and $R_j$.
\medskip\goodbreak

a) \emph{Estimation of $R_j$.}

Recall that $j$ is such that $j_{0} \leq j \leq N_n$ and $|\chi (z_1) - a_j| \leq \frac{1}{4}\, (1 - a_j)$.
We saw in the proof of this Lemma~\ref{drei} that $1 - |\chi (z_1)| \leq |1 - \chi (z_1) \leq \frac{5}{4} \, \sigma^j$. Hence:
\begin{displaymath} 
|\Delta| \leq |(\varphi \circ \chi) (z_1)| \leq \exp \bigg( - \frac{\delta}{|1 - \chi (z_1)|^\theta} \bigg) \lesssim \exp (- \tau \, \sigma ^{- j\theta} )  \, .
\end{displaymath} 

Now, use Lemma~\ref{vier} and \eqref{ob} to get: 
\begin{align*}
|R_j| 
& \leq \sum_{k > m_j} \frac{2^{k + 1}}{(1 - |\chi (z_1)|)^{k + 1}} \, |\Delta|^k 
\lesssim \sum_{k > m_j} 2^{k} \sigma^{- j k} \exp (- \tau k \, \sigma^{- j \theta}) \\
& \lesssim 2^{m_j} \sigma ^{- j m_j} \exp (- \tau m_j \sigma^{- j \theta}) \lesssim \exp (- \tau m_j \sigma^{- j \theta}) 
\end{align*}
for some absolute constant $\tau > 0$, that is:
\begin{equation} \label{R} 
|R_j| \lesssim \exp (- \tau n) 
\end{equation}
if we take:
\begin{equation} \label{adj}  
m_j = [n \, \sigma^{j \theta}] + 1 \, .
\end{equation} 
\smallskip

b) \emph{Estimation of $S_j$.} 

We saw in the estimation of $R_j$ that $1 - |\chi (z_1)| \gtrsim \sigma^j$. Now, remember that $h_{k}^{(l)} (a_j) = 0$ for $l < n$, since $f \in E$, we then  use
Lemma ~\ref{funf} to get, when we take the values: 
\begin{displaymath} 
a = a_j \, , \quad  1 - a_j = \sigma^j \, , \quad  b = \chi (z_1) \, , \quad \rho = \frac{1}{2} \, \raise 1,5 pt \hbox{,}
\end{displaymath} 
a good upper bound for $\frac{h_k [ \chi (z_1) ]}{k!}$ when $k \leq m_j$, namely:
\begin{displaymath} 
\bigg| \frac{h_{k} [\chi (z_1) ]}{k!} \bigg| \lesssim \frac{4^{k + 1}}{\sigma^{j (k + 1)}} \, \rho^n \, . 
\end{displaymath} 
We then obtain an estimate of the form:
\begin{align*} 
|S_j| 
\lesssim \sum_{k = 0}^{m_j} \rho^n  \frac{4^{k + 1}}{\sigma^{j (k + 1)}} 
& \lesssim \rho^n \frac{4^{m_j}}{\sigma^{j m_j}}
= \exp \Big( - n \log 2 + m_j \log 4 - j m_j \log \frac{7}{8} \Big) \\ 
& \lesssim  \exp (- 4 \tau n + B j m_j) 
\end{align*} 
with $\tau = \frac{1}{4}\, \log 2$ and $B \leq \log 4 + \log (8 / 7) \leq 2$;  or else, using (\ref{adj}): 
\begin{displaymath} 
|S_j| \lesssim \exp (- 4 \tau  n + B j n \, \sigma^{j \theta} + B j) \, .
\end{displaymath} 
But since $\sigma = 7 / 8 < 1$, the implied exponent, for $j_{0}\leq j \leq N_n$:
\begin{displaymath} 
- 4\tau  n +  B j n \, \sigma^{j \theta} + B j = n (- 4 \tau + B j \, \sigma^{j \theta}) + B j \, ,
\end{displaymath} 
is $\leq  - 2\tau n + B' \log n$, provided that we choose $j_0$ large enough, namely such that $j_0 \big( \frac{7}{8} \big)^{j_0 \theta} \leq 1 / 4$. This 
implies an inequality of the form:
\begin{equation} \label{S} 
|S_j| \lesssim \e^{- 2 \tau  n} n^{B'}\lesssim \e^{- \tau n} \, .
\end{equation} 

Putting the estimates \eqref{R} and \eqref{S} on $R_j$ and $S_j$ together, we obtain, for every $f\in E$ with $\Vert f \Vert_2 \leq 1$:
\begin{equation} \label{b} 
\Vert T_{2} f \Vert \lesssim \e^{- \tau n} \, . 
\end{equation}

It remains to bound from above the codimension of $E$. Since $N_n = \big[ \frac{\log 2n}{\theta \log 1/\sigma}\big] + 1 $ with $\sigma = 7 /8 $ 
and $m_j = [n \, \sigma^{j \theta}] + 1$, we see that:
\begin{displaymath} 
{\rm codim}\, E \leq \sum_{l = 0}^{n - 1}\sum_{j = 1}^{ N_n} m_j \leq \sum_{l = 0}^{n - 1} \sum_{j = 1}^{ N_n} (n \, \sigma^{j \theta} + 1)
\lesssim n^2 \sum_{j = 1}^\infty \sigma^{j \theta} + n \log n <  q \, n^2 \, .
\end{displaymath} 
Therefore \eqref{b} can be read as well, remembering the equality of approximation numbers and  Gelfand numbers: 
\begin{equation} \label{tres}   
a_{q \, n^2} (T_2) = c_{q \, n^{2}} (T_2)  \lesssim \e^{- \tau n}.  
\end{equation}

Putting the estimates \eqref{unus}, \eqref{duo}, and \eqref{tres} together ends the proof of Lemma~\ref{cle}.
\end{proof}
\smallskip

Finally, Lemma~\ref{0} and \eqref{sa} give:
\begin{displaymath}
a_{3 n^2} (C_\Phi) = a_{3 n^2} (T_1 \oplus T_2 \oplus T_3) \leq a_{n^2} (T_1) + a_{n^2} (T_2) + a_{n^2 } (T_3) 
\lesssim  \e^{- \tau n} \, ,
\end{displaymath}
thereby finishing the proof of Theorem~\ref{main}.
\end{proof}
%

\bigskip

\noindent{\bf Acknowledgments.} This work was initiated during a stay of the first and second-named authors in the University of Sevilla in February 2018. 
They wish to thank their Spanish colleagues for their kindness, and for the excellent working conditions which they provided.
\medskip

L. Rodr{\'\i}guez-Piazza is partially supported by the project MTM2015-63699-P (Spanish MINECO and FEDER funds).
\goodbreak


\smallskip\goodbreak

{\footnotesize
Daniel Li \\ 
Univ. Artois, Laboratoire de Math\'ematiques de Lens (LML) EA~2462, \& F\'ed\'eration CNRS Nord-Pas-de-Calais FR~2956, 
Facult\'e Jean Perrin, Rue Jean Souvraz, S.P.\kern 1mm 18 
F-62\kern 1mm 300 LENS, FRANCE \\
daniel.li@univ-artois.fr
\smallskip

Herv\'e Queff\'elec \\
Univ. Lille Nord de France, USTL,  
Laboratoire Paul Painlev\'e U.M.R. CNRS 8524 \& F\'ed\'eration CNRS Nord-Pas-de-Calais FR~2956 
F-59\kern 1mm 655 VILLENEUVE D'ASCQ Cedex, FRANCE \\
Herve.Queffelec@univ-lille.fr
\smallskip
 
Luis Rodr{\'\i}guez-Piazza \\
Universidad de Sevilla, Facultad de Matem\'aticas, Departamento de An\'alisis Matem\'atico \& IMUS,  
Calle Tarfia s/n \\ 
41\kern 1mm 012 SEVILLA, SPAIN \\
piazza@us.es
}

\end{document}